\allowdisplaybreaks \pagestyle{myheadings}
\newtheorem*{thm*}{Theorem}
\newtheorem{thm}{Theorem}
\newtheorem{lem}[thm]{Lemma}
\newtheorem{prop}[thm]{Proposition}
\newtheorem{coro}[thm]{Corollary}
\newcommand{\deltabarra}{\bar{\partial}\,}
\def\R{\mathbb R}
\def\C{\mathbb C}
\def\cC{\mathcal C}
\def\B{\mathcal S}
\def\Re{\operatorname{Re}}
\def\Re{\text{Re}}
\def\B{\mathcal B}
\def\Id{\mathbf{Id}}
\title{Distributional solutions of the Beltrami equation}
\author{A. L. Bais\'on, A. Clop, J. Orobitg}
\date{}
\begin{document}

\maketitle

\abstract{We study the distributional solutions to the (generalized) Beltrami equation under Sobolev assumptions on the Beltrami coefficients.
In this setting, we prove that these distributional solutions are true quasiregular maps and they are smoother than expected, that is, 
they have second order derivatives in $L^{1+\varepsilon}_{loc}$, for some $\varepsilon >0$. }

\section{Introduction }

\noindent In this work we consider the $\R$-linear Beltrami equation
\begin{equation} \label{genBeltrami}
\deltabarra f(z) - \mu(z)\,\partial f(z) - \nu(z)\,\overline{\partial f(z)}=0, \hspace{1cm}a.e.\,\,z\in\C
\end{equation} 
where $\mu$ and $\nu$, called the Beltrami coefficients of $f$, are $L^\infty(\C;\C)$ functions such that $\| |\mu| + |\nu |\|_\infty=k<1$, 
which means that the equation \eqref{genBeltrami} is uniformly elliptic. The derivatives $\partial f$ and $\overline\partial f$ are understood in 
the distributional sense. Usually, when $\nu=0$, the equation \eqref{genBeltrami} is called the complex Beltrami equation. When $\mu=0$ it is called the conjugate
Beltrami equation. In the general form one says \eqref{genBeltrami} as the generalized Beltrami equation.

\noindent If a function $f:\Omega\to \C$ belonging to the Sobolev space $W^{1,2}_{loc}$ satisfies \eqref{genBeltrami}, 
then its distributional derivatives satisfy the \textit{distortion inequality},
\begin{equation}\label{distortionineq}
|\deltabarra f(z)|\leq k\,|\partial f(z)|\hspace{1cm}a.e.\,\,z\in\Omega.
\end{equation}
Such mappings $f$ are called $K$-quasiregular, $K=\frac{1+k}{1-k}$. Bijective $K$-quasiregular maps are called $K$-quasiconformal. 
Quasiregular maps are automatically H\"older continuous, and either constant or open and discrete. 
Weakly $K$-quasiregular maps are functions $f\in W^{1,p}_{loc}$ for some $p\geq 1$ and such that \eqref{distortionineq} is satisfied. 
In the work \cite{AIS} from Astala, Iwaniec and Saksman, it was shown that if a weakly $K$-quasiregular 
mapping belongs to $W^{1,p}_{loc}$ for some $p>1+k=\frac{2K}{K+1}$ actually it belongs to $W^{1,q}_{loc}$ for all $q< \frac{2K}{K-1}$ and, therefore, 
it is a true $K$-quasiregular map. The example $f(z)=(z|z|^{\frac1K-1})^{-1}$ 
shows that for any $p<\frac{2K}{K+1}$ there exists a weakly $K$-quasiregular map in $W^{1,p}_{loc}$ which is not $K$-quasiregular.
Later, Petermichl and Volberg \cite{PV} proved that the weaker assumption $p\geq \frac{2K}{K+1}$ 
is already sufficient for weakly $K$-quasiregular maps to be $K$-quasiregular. (See the monograph \cite{AIM} for a complete and detailed information).

\noindent If instead of looking at the distortion inequality \eqref{distortionineq} one looks at the regularity of Beltrami coefficients, 
one can say more about quasiregular maps.
This was already noticed by Iwaniec \cite{I} for the complex Beltrami equation (see \cite{Ko} or \cite{CC} for a proof for the generalized Beltrami equation).

\begin{thm}[\cite{I}]
Let $\mu, \, \nu\in \operatorname{VMO}$ be compactly supported, and such that $\| |\mu| + |\nu |\|_\infty=k<1$. If $f\in W^{1,r}_{loc}$ for some $r\in (1,\infty)$ and further $f$ 
solves \eqref{genBeltrami}
then $f\in W^{1,s}_{loc}$ for every $s\in(1,\infty)$. In particular, $f$ is quasiregular.
\end{thm}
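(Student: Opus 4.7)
My plan is to recast \eqref{genBeltrami} as a singular-integral equation driven by the Beurling--Ahlfors transform $\mathcal{S}$, and then use the VMO hypothesis to invert the resulting operator on every $L^p$. Recall that $\mathcal{S}\colon L^p(\C)\to L^p(\C)$ is bounded for all $p\in(1,\infty)$, is an isometry on $L^2$, and satisfies $\partial F=\mathcal{S}(\deltabarra F)$ for every compactly supported $F\in W^{1,p}(\C)$. The decisive analytic input is Uchiyama's theorem (extending Coifman--Rochberg--Weiss): for $b\in\operatorname{VMO}$ the commutator $[M_b,\mathcal{S}]$ is compact on $L^p(\C)$ for every $p\in(1,\infty)$.

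\textbf{Localization and reformulation.} Fix $\Omega'\Subset\Omega$ and a cutoff $\varphi\in C_c^\infty(\Omega)$ with $\varphi\equiv 1$ on $\Omega'$. Then $F:=\varphi f\in W^{1,r}(\C)$ has compact support and satisfies
\[
\deltabarra F - \mu\,\partial F - \nu\,\overline{\partial F} \;=\; f\bigl(\deltabarra\varphi - \mu\,\partial\varphi - \nu\,\overline{\partial\varphi}\bigr) \;=:\; G,
\]
with $G\in L^r(\C)$ compactly supported. Setting $h:=\deltabarra F$, compact support yields $\partial F=\mathcal{S}h$, so the Beltrami equation becomes
\[
(I-\mathcal{T})h \;=\; G, \qquad \mathcal{T}h := \mu\,\mathcal{S}h + \nu\,\overline{\mathcal{S}h}.
\]

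\textbf{Inverting $I-\mathcal{T}$ on $L^p$.} At $p=2$ one has $\|\mathcal{T}\|_{L^2\to L^2}\le k<1$, so a Neumann series produces a bounded inverse. For general $p\in(1,\infty)$, I would exploit that $\mu,\nu\in\operatorname{VMO}$ may be approximated in the BMO norm by functions in $C_c^\infty(\C)$. Writing $M_\mu\mathcal{S}=\mathcal{S}M_\mu+[M_\mu,\mathcal{S}]$ (and analogously for $\nu$), one splits $\mathcal{T}$ into two pieces: (a) an operator whose symbol has arbitrarily small BMO norm, and whose $L^p$ operator norm can be made strictly less than $1$ using the sharp bound for $\|\mathcal{S}\|_{L^p\to L^p}$ together with the Coifman--Rochberg--Weiss commutator estimate; (b) an operator with smooth, compactly supported symbol which is compact on $L^p$ by Uchiyama's theorem. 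This exhibits $I-\mathcal{T}$ as Fredholm of index zero on every $L^p$. Injectivity is then inherited from the hypothesis $f\in W^{1,r}_{loc}$ through a Stoilow-type uniqueness argument for quasiregular mappings applied to the Cauchy transform of a putative kernel element.

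\textbf{Bootstrap and main obstacle.} Once $I-\mathcal{T}$ is invertible on every $L^p$, the compactly supported source $G$ lies in $L^p$ as soon as $f\in L^p_{loc}$; solving $(I-\mathcal{T})h=G$ and applying Sobolev embedding to $F$ iteratively propagates $W^{1,p}_{loc}$ regularity of $f$ to every finite exponent. The main obstacle is the invertibility step: without the VMO assumption one is confined to the Astala range $p\in\bigl(1+k,\,1+\tfrac{1}{k}\bigr)$, and it is precisely the compactness of the commutators $[M_\mu,\mathcal{S}]$ and $[M_\nu,\mathcal{S}]$ on every $L^p$ that allows the Fredholm argument to proceed uniformly in $p$; proving injectivity on $L^p$ away from $L^2$ is the subtle part, and is where the a priori hypothesis $f\in W^{1,r}_{loc}$ enters decisively.
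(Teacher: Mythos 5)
This theorem is quoted from Iwaniec's paper \cite{I} and is not proved in the present article; the authors only invoke it (in Lemma \ref{invert}(b)) and describe its scheme as ``injective plus Fredholm of index $0$''. Your overall strategy --- localize, rewrite the equation as $(\Id-\mu\mathcal{S}-\nu\mathbf{C}\mathcal{S})(\deltabarra F)=G$, invert that operator on every $L^p$ using the VMO hypothesis, and bootstrap via Sobolev embedding --- is indeed that standard scheme. But two of your key steps do not work as written. First, the splitting of $\mathcal{T}$ into ``a piece with small BMO symbol, hence small $L^p$ norm'' plus ``a compact piece with smooth symbol'' is wrong on both counts: the operator norm of $M_{\mu_2}\mathcal{S}$ on $L^p$ is governed by $\|\mu_2\|_\infty$, not by $\|\mu_2\|_{\operatorname{BMO}}$, and VMO functions are approximated by smooth ones only in BMO norm, so the ``small'' piece is not small; and $M_{\mu_1}\mathcal{S}$ with $\mu_1\in C_c^\infty$ is not compact (only the commutator $[M_{\mu_1},\mathcal{S}]$ is). The actual Fredholm argument replaces $\Id-M_\mu\mathcal{S}$ by $\Id-\mathcal{S}M_\mu$ modulo the compact commutator and then builds a parametrix from the truncated Neumann series, using Iwaniec's polynomial bound $\|\mathcal{S}^n\|_{L^p\to L^p}\le C(p)\,n^{\alpha}$ so that $\|\mathcal{S}^{n}M_{\mu^{n}}\|\le C(p)n^{\alpha}k^{n}\to 0$; this ingredient is absent from your proposal and is the heart of the matter.

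Second, your injectivity step is misplaced. Injectivity of $\Id-\mathcal{T}$ on $L^p$ is a property of the operator and cannot be ``inherited from the hypothesis $f\in W^{1,r}_{loc}$''; moreover a Stoilow/Liouville argument applied to $\mathcal{C}h$ for a kernel element $h\in L^p$ fails precisely when $p<\frac{2K}{K+1}$, since weakly quasiregular maps in that range need not be quasiregular (the radial stretching recalled in the introduction of the paper is a counterexample). The correct argument is elementary: a kernel element satisfies $h=\mu\mathcal{S}h+\nu\overline{\mathcal{S}h}$, hence is supported in $\operatorname{supp}\mu\cup\operatorname{supp}\nu$; for $p>2$ this places $h$ in $L^2$, where the Neumann series gives injectivity, and for $p<2$ one applies the index-zero property together with injectivity of the adjoint on $L^{p'}$ with $p'>2$. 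Your localization identity also has a small slip --- the cutoff term coming from $\nu\,\overline{\partial F}$ is $\nu\,\overline{f}\,\overline{\partial\varphi}$, not $\nu f\,\overline{\partial\varphi}$ --- but this is harmless for the bootstrap, which is otherwise correctly set up.
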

\noindent In this sense, the work \cite{CMO} is a good reference to summarize this aspect about the regularity of the Beltrami coefficients.
 Recall that $\operatorname{VMO}$ denotes the space of vanishing mean oscillation functions, that is, the closure of compactly supported smooth functions with the
$\operatorname{BMO}$ norm. Obviously the above result is remarkable when $r$ is close to $1$. It turns out that one might go even beyond the space $W^{1,r}_{loc}$, $r>1$, 
and deal with solutions $f\in L^r_{loc}$ in the sense of distributions. Of course, this requires an extra degree of smoothness on $\mu$ and $\nu$, so that the 
distribution $\deltabarra f-\mu\,\partial f - \nu \,\overline{\partial f}$ is well defined. Such question for $\deltabarra f-\mu\,\partial f$ 
was treated in detail in \cite{CFMOZ}. Later, in \cite{B} interest arose in the counterparts for 
the distributional conjugate Beltrami operator $\partial f-\nu\,\overline{\partial f}$ with real valued coefficient $\nu\in W^{1,p}$, $p>2$.

\noindent The well known Weyl's Lemma asserts that if $T$ is any (Schwartz) distribution such that $\deltabarra T(\varphi)=0$
for each test function $\varphi\in  \mathcal D$ (by $ \mathcal D$ we mean the algebra of compactly supported $C^{\infty}$ functions), 
then $T$ agrees with a holomorphic function.
In other words, distributional solutions to Cauchy-Riemann equation are actually strong solutions. 
When trying to extend this kind of result to the generalized Beltrami equation, one first must define the distribution
\begin{equation}\label{sense}
 \left(\overline{\partial}-\mu\partial -\nu \mathbf{C}\partial\right) T= \overline{\partial} T-\mu\partial T -\nu\overline{\partial T}\,.
\end{equation}
Here and henceforth, $\mathbf{C}$ denotes the complex conjugation operator $\mathbf{C} f = \overline{f}$.
The expression \eqref{sense} needs not make sense, because bounded functions in general do not multiply distributions nicely. 
However, if the multiplier is asked to exhibit some regularity, and the distribution $T$ is an integrable function,
then something may be done. Namely, given a function $f$ and a test function $\varphi$, one can write
\begin{equation}\label{defdist}
\begin{aligned}
\left\langle\,\left(\overline{\partial}-\mu\partial-\nu\mathbf{C}\partial \right) f\,,\,\varphi\right\rangle
& =  \left\langle\,\overline{\partial}f\,-\,\mu\,\partial f\,-\,\nu\,\overline{\partial f}\,,\,\varphi\,\right\rangle\,= \\
&= -\left\langle\,f\,,\,\partial\varphi\,\right\rangle\,+\,\left\langle\,f\,,\overline{\partial}\left(\,\overline{\mu}\,\varphi\,\right)\,\right\rangle\,
+\,\left\langle\,\overline{f}\,,\, \partial\left(\,\overline{\nu}\,\varphi\,\right)\,\right\rangle\,\\
&= -\left\langle\,f\,,\, \partial\varphi\,\right\rangle + \left\langle\,f\,,\varphi\overline{\partial \mu} \,\right\rangle + \left\langle\,\overline{f}\,,
\varphi\partial\overline{\nu} \,\right\rangle \\
& \qquad + \left\langle\,f\,,\,\overline{\mu}\overline{\partial}\varphi\,\right\rangle + 
\left\langle\,\overline{f}\,,\,\overline{\nu}\partial\varphi\,\right\rangle ,
\end{aligned}
\end{equation}
whenever each term makes sense. For instance, this is the case if  $\mu,\,\nu\in W^{1,p}(\C)$ are compactly supported
and $f\in L^q_{loc}(\C)$ with $\frac{1}{p}+\frac{1}{q}=1$. Hence we can call $\overline{\partial}f-\mu\partial f-\nu\overline{\partial f}$
the Beltrami distributional derivative of $f$.
Analogously, we say that $f$ is a distributional solution of
$$
\deltabarra f(z) = \mu(z)\,\partial f(z) + \nu(z)\,\overline{\partial f(z)} + h,
$$
when
$$
\left\langle\, \deltabarra f(z) - \mu(z)\,\partial f(z) - \nu(z)\,\overline{\partial f(z)} , \varphi \right\rangle\, = 
\left\langle\, h , \varphi \right\rangle    \qquad \text{for all } \varphi \in \mathcal D .
$$
Observe that the conditions $\mu,\,\nu\in W^{1,p}(\C)$ compactly supported, $h\in L^1_{loc}(\C)$ and $f\in L^{\frac{r}{r-1}}_{loc}(\C)$ with $r\le p$
are sufficient for \eqref{defdist} to make sense. 

\noindent Our first result extends  \cite[Corollary 3.3]{B} to the case $p=2$, and meets its $\C$-linear counterpart in \cite[Theorem 7]{CFMOZ}.

\begin{thm}\label{main}
 Let $1<r<p\le\infty$ and let $\mu, \nu \in W^{1,p}(\mathbb C)$ be compactly supported such that $\| |\mu| + |\nu| \|_{\infty}= k= \frac{K-1}{K+1}<1$. Assume that
 $f\in L^{\frac{r}{r-1}}_{loc}(\mathbb C)$ is a distributional solution of \eqref{genBeltrami}, that is,
 \begin{equation*} 
 \deltabarra f(z) =\mu(z)\,\partial f(z) + \nu(z)\,\overline{\partial f(z)}.
\end{equation*}
\begin{enumerate}
\item If $p>2$, then $f\in  W^{2,p}_{loc}(\mathbb C)$.
\item If $p=2$, then $f\in  W^{2,q}_{loc}(\mathbb C)$, for all $q<2$. 
\item When $\mu\equiv 0$, if $\frac{2K}{K+1}<p<2$ and $\frac{1}{r} > \frac{1}{p}+ \frac{K-1}{2K}$, then $f\in  W^{2,s}_{loc}(\mathbb C)$ for all 
$\frac{1}{s} > \frac{1}{p}+ \frac{K-1}{2K}$
\item When $K< 2$,  $K <p<2$ and $\frac{1}{r} > \frac{1}{p}+ \frac{K-1}{2K}$, then $f\in  W^{2,s}_{loc}(\mathbb C)$ for all 
$\frac{1}{s} > \frac{1}{p}+ \frac{K-1}{2K}$
\end{enumerate}
\end{thm}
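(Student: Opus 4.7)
The plan is to first promote the distributional solution $f$ to a \emph{strong} (weakly differentiable) quasiregular map in $W^{1,s}_{loc}$ for $s$ in the Astala--Iwaniec--Saksman invertibility interval, and then to differentiate the Beltrami equation once to read off $W^{2,p}_{loc}$-regularity.

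I begin by localising: multiplying $f$ by a cutoff $\chi\in\mathcal{D}$ yields a compactly supported distributional solution of the same equation with a compactly supported inhomogeneity $h\in L^{r/(r-1)}$, so we may assume $f$ itself is compactly supported. Since $r<p$, the duality $\tfrac{1}{p}+\tfrac{r-1}{r}<1$ holds, and the Leibniz identities $\mu\,\d f=\d(\mu f)-(\d\mu)f$ and $\nu\,\overline{\d f}=\deltabarra(\nu\bar f)-(\deltabarra\nu)\bar f$ are genuine distributional equalities. The equation rewrites as $\deltabarra(f-\nu\bar f)=\d(\mu f)-(\d\mu)f-(\deltabarra\nu)\bar f+h$, and applying the Cauchy transform $\cC$ together with $\cC\d=\mathbf{B}$ (for the Beurling--Ahlfors transform) yields the integral representation
$$f=\mathbf{B}(\mu f)+\nu\bar f-\cC\bigl((\d\mu)f+(\deltabarra\nu)\bar f-h\bigr).$$

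The key regularisation step is to differentiate this identity. Using $\d\mathbf{B}=\mathbf{B}\d$ and the relation $\d\bar f=\overline{\deltabarra f}=\bar\mu\,\overline{\d f}+\bar\nu\,\d f$ provided by the original equation, one obtains an equation for $F=\d f$ of the form
$$(1-|\nu|^2)F-\mathbf{B}(\mu F)-\nu\bar\mu\,\bar F=\Phi,$$
where $\Phi$ is concretely a function in $L^s$ with $\tfrac{1}{s}=\tfrac{1}{p}+\tfrac{r-1}{r}$, by H\"older and the $L^s$-boundedness of $\mathbf{B}$. The operator on the left fits into the Astala--Iwaniec--Saksman framework for generalised Beltrami operators and is invertible on $L^s$ throughout the interval $\bigl(\tfrac{2K}{K+1},\tfrac{2K}{K-1}\bigr)$; uniqueness of distributional solutions to this operator equation then forces $F\in L^s_{loc}$, and the original equation gives $\deltabarra f\in L^s_{loc}$. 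Hence $f\in W^{1,s}_{loc}$.

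Once $f\in W^{1,s}_{loc}$ with $s>1$, the distributional equation holds strongly and $f$ is weakly quasiregular. For cases (1)--(2), compactly supported $W^{1,p}$ functions with $p\ge 2$ belong to $\operatorname{VMO}$, so Iwaniec's theorem promotes $f$ to $W^{1,t}_{loc}$ for every $t<\infty$; for (3)--(4), the promotion uses Astala--Iwaniec--Saksman and the Petermichl--Volberg endpoint, the precise conditions on $p$ and $r$ being exactly what is required to stay within the invertibility range. Finally, differentiating the strong equation and using $\d\overline{\d f}=\overline{\deltabarra F}$, after solving the induced $2\times 2$ linear system in $(\deltabarra F,\overline{\deltabarra F})$, one obtains
$$\deltabarra F-\tilde\mu\,\d F-\tilde\nu\,\overline{\d F}=R,\qquad \tilde\mu=\frac{\mu}{1-|\nu|^2},\;\tilde\nu=\frac{\nu\bar\mu}{1-|\nu|^2},$$
with $|\tilde\mu|+|\tilde\nu|\le k$ and forcing $R\in L^p_{loc}$, since it is a bounded combination of $(\d\mu)F$ and $(\d\nu)\bar F$ divided by $1-|\nu|^2$. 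The $L^p$-theory for Beltrami equations in the appropriate range then yields $\d F=\d^2 f\in L^p_{loc}$, i.e., $f\in W^{2,p}_{loc}$; the $W^{2,s}$ variants in (2)--(4) follow analogously.

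The main obstacle I anticipate is the borderline case $p=2$ of (2) and the low-$p$ cases (3)--(4), where $W^{1,2}$ barely fails to embed into $L^\infty$ and the Iwaniec upgrade yields $W^{1,t}$ only for $t<\infty$, preventing access to $W^{2,2}$. In these ranges the sharp $L^s$-norms of $\mathbf{B}$ and the Petermichl--Volberg endpoint $p=\tfrac{2K}{K+1}$ must be wielded carefully to keep every stage of the argument inside the invertibility interval. A secondary technical point is to verify that every distributional product appearing in the integral representation is well defined, which is precisely the role of the duality $\tfrac{1}{p}+\tfrac{r-1}{r}<1$ inherited from the hypothesis $r<p$.
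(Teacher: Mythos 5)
Your overall strategy runs parallel to the paper's: localize, derive a first--order identity of Beltrami--operator type for $\partial f$, invoke invertibility on $L^s$ with $\frac1s=\frac1p+\frac1{r'}$, bootstrap integrability, then differentiate the equation to reach second derivatives. The gap sits at the crux. Having written $(1-|\nu|^2)F-\B(\mu F)-\nu\bar\mu\,\bar F=\Phi$ for the \emph{distribution} $F=\partial f$, you conclude $F\in L^s_{loc}$ from ``uniqueness of distributional solutions to this operator equation.'' That uniqueness assertion is itself a Weyl--type lemma for a Beltrami operator --- essentially the statement being proved --- so as written the argument is circular. The mechanism the paper uses instead is duality: one establishes $\langle \partial F, T\Psi\rangle=\langle R,\Psi\rangle$ for all test functions $\Psi$, with $R\in L^s$ and $T$ a (conjugated form of) your operator; since the right--hand side extends continuously to $\Psi\in L^{s'}$ and $T$ is invertible there, one substitutes $\Psi=T^{-1}\varphi$ and identifies $\partial F=(T^*)^{-1}R\in L^s$. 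This is why the paper needs invertibility of both $T$ and its adjoint $T^*$, and why $s$ and $s'$ must \emph{both} lie in the critical interval $\left(\frac{2K}{K+1},\frac{2K}{K-1}\right)$ --- which is precisely where the hypotheses $K<2$, $K<p<2$ and $\frac1r>\frac1p+\frac{K-1}{2K}$ of parts (3)--(4) come from; your proposal does not account for this two--sided constraint, nor for why part (3) escapes it ($\Id-\nu\mathbf{C}$ is invertible on every $L^t$). Relatedly, your derivation of the equation for $F$ uses the pointwise identity $\partial\bar f=\bar\mu\,\overline{\partial f}+\bar\nu\,\partial f$ ``provided by the original equation,'' but at that stage the equation holds only in the weak sense of \eqref{defdist}; the substitution must be carried out at the level of pairings against $W^{1,p}_c\cap L^\infty$ test functions, as the paper does when it replaces $\langle\overline F,\overline\partial(\overline\nu\varphi)\rangle$.

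Two smaller points. First, the operator $(1-|\nu|^2)\Id-\B(\mu\,\cdot)-\nu\bar\mu\,\mathbf{C}$ is not literally of the Astala--Iwaniec--Saksman form $\Id-\mu\B-\nu\overline{\B}$: the multiplier sits inside $\B$ and the last term is a plain conjugation, so its invertibility requires the factorization $(\Id-\nu_1\mathbf{C})(\Id-\mu_2\B-\nu_2\mathbf{C}\B)$ of Lemma \ref{invert} (or its adjoint), not a bare citation of \cite{AIS}. Second, in Part 1 your final right--hand side $R$ contains $(\partial\mu)(\partial f)$ with $\partial\mu\in L^p$ and, at that stage of the bootstrap, only $\partial f\in L^q_{loc}$ for every $q<\infty$; this yields $R\in L^{p-\varepsilon}_{loc}$ rather than $L^p_{loc}$, so you need one further iteration (or, as in the paper, the invertibility of $\Id-\mu\B-\nu\overline{\B}$ on the Sobolev space $W^{1,p}$ itself for $p\ge 2$, Lemma \ref{inverL2}) to close the case $p>2$ without loss of exponent.
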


\noindent In all previous cases, $f$ is quasiregular. We emphasize this fact in the following corollary. The result is new for cases $\nu\neq 0$ and $p\le 2$.
\begin{coro}
 Let $1< p\le\infty$ and let $\mu, \nu \in W^{1,p}(\mathbb C)$ be compactly supported such that $\| |\mu| + |\nu| \|_{\infty}= k<1$. Assume that
 $f\in W^{1,2}_{loc}(\mathbb C)$ satisfies
 \begin{equation*}
  \deltabarra f(z) =\mu(z)\,\partial f(z) + \nu(z)\,\overline{\partial f(z)}, \quad \text{a.e. }z\in\C .
\end{equation*}

\begin{enumerate}
\item If $p>2$, then $f\in  W^{2,p}_{loc}(\mathbb C)$.
\item If $p=2$, then $f\in  W^{2,q}_{loc}(\mathbb C)$, for all $q<2$.
\item When $\mu\equiv 0$ and $\frac{2K}{K+1}<p<2$, then $f\in  W^{2,s}_{loc}(\mathbb C)$ for all $\frac{1}{s} > \frac{1}{p}+ \frac{K-1}{2K}$.
\item When $K<2$ and  $K<p<2$, then $f\in  W^{2,s}_{loc}(\mathbb C)$ for all $\frac{1}{s} > \frac{1}{p}+ \frac{K-1}{2K}$.
\end{enumerate}
\end{coro}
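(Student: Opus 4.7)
The plan is to deduce each case of the corollary directly from the corresponding case of Theorem \ref{main}. The only work consists in upgrading the a.e.\ formulation of the Beltrami equation to its distributional counterpart, and in checking that $f$ lies in $L^{r/(r-1)}_{loc}(\C)$ for some admissible exponent $r$.

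For the first point, $f, \partial f$ and $\deltabarra f$ all belong to $L^2_{loc}$, while $\mu,\nu\in W^{1,p}$ are compactly supported, so the integration by parts displayed in \eqref{defdist} is justified term by term. Testing the a.e.\ identity against any $\varphi\in\mathcal D$ therefore produces the distributional Beltrami equation with $h\equiv 0$. For the integrability, two-dimensional Sobolev embedding gives $W^{1,2}_{loc}(\C)\hookrightarrow L^t_{loc}(\C)$ for every finite $t$, so $f$ belongs to every finite local $L^t$ space.

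It remains to exhibit a valid $r$ in each case. In (1) and (2) the only requirement is $1<r<p$, which is trivial since $p>1$. In (3) and (4) we need in addition $\frac{1}{r}>\frac{1}{p}+\frac{K-1}{2K}$, i.e.\ $r<R:=\left(\frac{1}{p}+\frac{K-1}{2K}\right)^{-1}$. The hypothesis $p>\frac{2K}{K+1}$, which is automatic in (4) because $K>1$ implies $K>\frac{2K}{K+1}$, forces $\frac{1}{p}+\frac{K-1}{2K}<1$ and hence $R>1$; meanwhile $R<p$ reduces to $\frac{K-1}{2K}>0$, which holds since $K>1$. Thus the interval $(1,R)\subset(1,p)$ is nonempty, any $r\in(1,R)$ will do, and Theorem \ref{main} yields the assertion.

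There is no genuine obstacle: all the analytic content lives in Theorem \ref{main}, and the present corollary is merely the observation that the stronger hypothesis $f\in W^{1,2}_{loc}$ automatically gives the integrability $f\in L^{r/(r-1)}_{loc}$ required by that theorem. The only point worth a moment's thought is the feasibility of the admissible range of $r$ in parts (3)--(4), which hinges on $p>\frac{2K}{K+1}$.
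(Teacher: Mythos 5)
Your proposal is correct and matches the paper's (implicit) derivation: the corollary is stated as an immediate consequence of Theorem \ref{main}, obtained exactly as you do by noting that $f\in W^{1,2}_{loc}(\C)$ embeds into $L^{t}_{loc}(\C)$ for every finite $t$, that the a.e.\ equation passes to the distributional formulation \eqref{defdist}, and that an admissible $r$ exists in each case since $p>\frac{2K}{K+1}$ forces $\frac{1}{p}+\frac{K-1}{2K}<1$.
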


\noindent We note that if $f$ is quasiconformal, following an argument similar to that in  \cite[Proposition 4]{CFMOZ}, we can remove the restriction
$K<2$. That is,

\begin{prop}\label{QC}
 Let $\mu, \nu \in W^{1,p}(\mathbb C)$ be compactly supported such that $\| |\mu| + |\nu| \|_{\infty}=\frac{K-1}{K+1}$ for some $K\ge 1$ 
 and $\frac{2K}{K+1}<p<2$. Assume that
 $f\in W^{1,2}_{loc}(\mathbb C)$ is a homeomorphism  satisfying \eqref{genBeltrami}. Then,
 $$
 f\in  W^{2,s}_{loc}(\mathbb C), \qquad \text{for all } \frac{1}{s} > \frac{1}{p}+ \frac{K-1}{2K} .
 $$
 \end{prop}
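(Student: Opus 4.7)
\emph{Proof proposal.} The strategy is to exploit the additional structure of quasiconformality in order to bypass the restriction $K<2$ appearing in Theorem~\ref{main}(4), in the spirit of \cite[Proposition~4]{CFMOZ}.

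Since $f\in W^{1,2}_{loc}(\C)$ is a homeomorphism solving \eqref{genBeltrami} with distortion $k=(K-1)/(K+1)$, it is a genuine $K$-quasiconformal map. Astala's area distortion theorem then yields $|Df|\in L^{q}_{loc}(\C)$ for every $q<\frac{2K}{K-1}$, with no restriction on the size of $K$; this is the decisive gain with respect to the purely distributional setting of Theorem~\ref{main}(4).

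The next step is to differentiate \eqref{genBeltrami} in the $z$-variable. Using commutativity of distributional partials and the identity $\partial\overline{\partial f}=\overline{\bar\partial\partial f}$, we obtain
\[
\bar\partial(\partial f)-\mu\,\partial(\partial f)-\nu\,\overline{\bar\partial(\partial f)}=(\partial\mu)\,\partial f+(\partial\nu)\,\overline{\partial f}.
\]
Writing $g=\partial f$ and inverting the pointwise operator $I-\nu\mathbf{C}$, whose inverse equals $(1-|\nu|^{2})^{-1}(I+\nu\mathbf{C})$ because $|\nu|\le k<1$, this identity can be recast as
\[
\bar\partial g-\widetilde\mu\,\partial g-\widetilde\nu\,\overline{\partial g}=F,
\]
where $\widetilde\mu=\mu/(1-|\nu|^{2})$ and $\widetilde\nu=\bar\mu\,\nu/(1-|\nu|^{2})$ still satisfy $|\widetilde\mu|+|\widetilde\nu|=|\mu|/(1-|\nu|)\le k$ and inherit the $W^{1,p}$ regularity and compact support of $\mu,\nu$. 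The source $F$ is a bounded linear combination of the products $(\partial\mu)g$, $(\partial\nu)\bar g$ and their conjugates.

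Combining H\"older's inequality with the higher integrability of $g$ from Step~1 and with $\nabla\mu,\nabla\nu\in L^{p}$, one gets $F\in L^{s}_{loc}(\C)$ for every $s$ satisfying $\frac{1}{s}>\frac{1}{p}+\frac{K-1}{2K}$. Invoking the Calder\'on--Zygmund regularity theory for the generalized Beltrami equation with right-hand side---equivalently, invertibility of the associated Beurling--Ahlfors-type operator on $L^{s}$ in the Astala--Iwaniec--Saksman admissible range---then gives $g\in W^{1,s}_{loc}(\C)$, hence $f\in W^{2,s}_{loc}(\C)$. I expect the main obstacle to be making the differentiation rigorous: multiplying a $W^{1,p}$ function with $p<2$ against a $W^{1,2}$ map and commuting distributional derivatives requires care, as does the verification that the recast equation has genuine Beltrami form with $W^{1,p}$ coefficients. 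It is precisely the hypothesis that $f$ is a bona fide $W^{1,2}_{loc}$ homeomorphism---rather than a distribution in $L^{r/(r-1)}_{loc}$---that renders these manipulations legitimate and provides, via Astala's theorem, the free integrability of $Df$ independently of any bound on $K$.
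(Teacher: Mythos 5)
Your first two steps match the paper's starting point: Astala's area distortion theorem gives $|Df|\in L^{q}_{loc}$ for all $q<\frac{2K}{K-1}$, and differentiating \eqref{genBeltrami} and absorbing the conjugation term produces a generalized Beltrami equation for $g=\partial f$ whose source $F$ lies in $L^{s}_{c}$ for $\frac1s>\frac1p+\frac{K-1}{2K}$. The gap is in your final step. You conclude by inverting the Beltrami operator ``on $L^{s}$ in the Astala--Iwaniec--Saksman admissible range,'' but the exponents $s$ at stake are in general \emph{not} in that range. The admissible window is $\frac{2K}{K+1}<s<\frac{2K}{K-1}$, whereas as soon as $p\le K$ every relevant $s$ satisfies $\frac1s>\frac1p+\frac{K-1}{2K}\ge\frac1K+\frac{K-1}{2K}=\frac{K+1}{2K}$, i.e.\ $s<\frac{2K}{K+1}$ sits \emph{below} the window (concretely: $K=3$, $p=1.6$ forces $s<1.05$ while the window is $(1.5,3)$; and for $K\ge 2$ every admissible $p<2$ satisfies $p\le K$). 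So your argument only reproduces Theorem \ref{main}(4), which carries precisely the restriction $K<p<2$, $K<2$ that Proposition \ref{QC} is meant to remove. Quasiconformality enters your proof only through Astala's theorem, which holds equally for quasiregular maps and cannot by itself break the $K<2$ barrier; the flagged difficulty about ``making the differentiation rigorous'' is not where the real obstruction lies.

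The missing idea --- and the place where injectivity is genuinely used --- is the logarithm device of Lemma \ref{claim}. For a quasiconformal $f$ with smooth coefficients, $\partial f$ never vanishes, so $\log\partial f$ is well defined; dividing the differentiated equation by $\partial f$ shows that $\log\partial f$ solves a Beltrami equation whose right-hand side is bounded pointwise by $C(K)\left(|\partial\mu|+|\partial\nu|\right)\in L^{p}$. The exponent is now $p\in\left(\frac{2K}{K+1},2\right)$, which \emph{does} lie in the admissible window, so AIS yields the global bound $\|D(\log\partial f)\|_{L^{p}}\le C(K,p)\,\||\partial\mu|+|\partial\nu|\|_{L^{p}}$. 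One then recovers the second derivative as the product $\partial\partial f=\partial(\log\partial f)\cdot\partial f$ and applies H\"older against Astala's integrability of $\partial f$ to land in $L^{s}_{loc}$ with $\frac1s>\frac1p+\frac{K-1}{2K}$. Since $\mu,\nu$ are only $W^{1,p}$, this must be run on mollified coefficients and the associated smooth quasiconformal maps $f_{n}$, passing to the limit by weak compactness of $\{\partial\partial f_{n}\}$ in $L^{s}_{loc}$ --- an approximation step your proposal also omits.
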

 
 \noindent At the end of this note we'll provide a proof. Let's mention that Proposition \ref{QC} is useful to study \eqref{genBeltrami} when 
 $\mu, \nu \in W^{s,p}(\mathbb C)$ with $1<s<2$ and $sp<2$ (see \cite{Pr}). 
 
 \noindent As stated in \cite[p.205]{CFMOZ}, the radial stretching $f(z)= z|z|^{\frac{1}{K}-1}$ if $|z|\le 1$ and $f(z)=z$ if $|z|>1$ shows the sharpeness of the 
above proposition because $f$ has Beltrami coefficient in $W^{1,p}$ for every $p<2$ and, however, the second derivatives of $f$ live in no better
space than $L^{\frac{2K}{2K-1}, \infty}$.

\section{Preliminaries}

Here we are considering the real inner product $\left\langle\,f\,,\,g\,\right\rangle\,=\,\Re\int\,f\overline{g}$.
We note that under this structure, the distributional derivatives behave as
$$\begin{aligned}
\left\langle\,\overline{\partial}f\,,\,\varphi\,\right\rangle\,=\,-\Re\int f\,\overline{\partial\varphi}\,&=\,-\,\left\langle\,f\,,\,\partial\varphi\,\right\rangle\\
\left\langle\,\partial f\,,\,\varphi\,\right\rangle\,=\,-\Re\int f\,\partial\overline{\varphi}\,&=\,-\,\left\langle\,f\,,\,\overline{\partial}\varphi\,\right\rangle\,.
\end{aligned}$$
Meanwhile the pointwise multiplication for proper functions $\mu$ satisfies
$$
\left\langle\,\mu\,f\,,\,\varphi\,\right\rangle\,=\,\Re\int\,\mu\,f\,\overline{\varphi}\,
=\,\Re\int\,f\,\overline{\overline{\mu}\,\varphi}\,=\,\left\langle\,f\,,\,\overline{\mu}\,\varphi\,\right\rangle\,.
$$
Naturally, this is consistent with the integral expression that we have used in the definition of the Beltrami distributional derivative.
The reason why we use this notion of duality is that the conjugation operator becomes $\mathbb R$-self-adjoint
$$
\left\langle\,\mathbf{C}f\,,\,g\,\right\rangle\,=\,\Re\,\int\,\overline{f}\,\overline{g}\,=
\,\Re\,\int\,f\,\,g\,=\,\left\langle\,f\,,\,\mathbf{C}g\,\right\rangle .
$$

\noindent Related to the Beltrami equations appear the Cauchy transform and the Beurling transform.
By $ \cC f$ one denotes the solid Cauchy transform of $f$,
$$
\cC f(z)=\frac{1}{\pi}\int_{\mathbb C}\frac{f(w)}{z-w}\,dA(w) ,
$$
while by $\B f$ one means the Beurling transform of the function $f$, a principal value convolution operator,
$$
\B f(z)=\frac{-1}{\pi}\lim_{\varepsilon\to 0}\int_{|w-z|\geq\varepsilon}\frac{f(w)}{(w-z)^2}\,dA(w) 
=\frac{-1}{\pi} \text{ pv}\int \frac{f(w)}{(w-z)^2}\,dA(w)  .
$$
The basic relations are $\partial\cC =\B $ and $\B \overline{\partial}= \partial$. By $\B^*$ we mean the singular integral operator obtained by simply conjugating the kernel of $\B$, that is,
$$
\B^* f(z)=\frac{-1}{\pi} \text{ pv}\int \frac{f(w)}{(\bar w-\bar z)^2}\,dA(w).
$$
As Calder\'on-Zygmund operators, $\B$ and $\B^*$ are bounded on $L^p(\C)$, $1<p<\infty$. Even more, they are bounded on $W^{s,p}(\C)$, because Sobolev spaces
$W^{s,p}=G_s*L^p$ are Bessel potential spaces.
Note that $\B^*$ is the $L^2$-inverse of $\B$ ($\B \B^* =\B^* \B = \Id$), moreover it is also the adjoint of $\B$ $\langle \B f, g\rangle = \langle f,\B^* g\rangle$
and  $\mathbf{C}\B(f) = \overline{\B (f)}=\B^* \mathbf{C}(f)$. From now, and abusing of notation, we'll denote $\mathbf{C}\B$ by $\overline{\B}$.
For compactly smooth functions $f$, we clearly have 
\begin{align*}
&\cC (\overline{\partial}f)= \overline{\partial}\cC (f)= f, \quad \partial\B(f) = \B(\partial f) 
\qquad\text{ and } \\
& \overline{\partial}\B (f)= \B (\overline{\partial}f) = \partial\cC (\overline{\partial}f) =\partial f, \quad
\partial \B^* (f)= \B^* (\partial f) = \overline{\partial}\cC^* (\partial f) =\overline{\partial} f, 
\end{align*}
and these identities extend for Sobolev functions in $W^{1,p}$, $p>1$.

\noindent When $h\in L^{p'}$, $p'=p/(p-1)$, $1<p<\infty$, the distributions $\partial h$ and $\overline{\partial} h$ act continuosly on $g\in W^{1,p}$
$$
\langle \partial h, g \rangle = -\langle h, \overline{\partial}g \rangle , \qquad \langle \overline{\partial} h, g \rangle = -\langle h,\partial g \rangle .
$$
In this setting, $\B(\partial h)$ can be defined as the distribution $\partial(\B h)$, that is,
$$
\langle \B (\partial h) ,    g \rangle  =\langle  \partial h ,   \B^* g \rangle  =-\langle  h ,  \overline{\partial} \B^* g \rangle =
-\langle  h , \B^* (\overline{\partial}g) \rangle =
-\langle \B h , \overline{\partial}g\rangle =
\langle \partial (\B h) , g\rangle
$$
 and also $\overline{\partial}\B h = \B\overline{\partial} h$. So, when $h\in L^{p'}$ we have $\B( \overline{\partial} h) = \partial h$
 and $\B^* (\partial h) =  \overline{\partial} h$ in the distributional sense.
 
\noindent After these considerations, we write
$$
\overline{\partial} f(z) - \mu(z)\,\partial f(z) - \nu(z)\,\overline{\partial f(z)}= h
$$
in the form
$$
(\Id-\mu\,\B -\nu\, \overline{\B}) (\overline{\partial} f) = h,
$$
where $\Id$ denotes the identity operator. Therefore, we realize that the study of \eqref{genBeltrami} is linked to the invertibility of the Beltrami 
operator $\Id-\mu\,\B -\nu\, \overline{\B}$ in an appropriate function space. 
This is the natural argument that one uses in the remarkable papers from Ahlfors \cite{Ahl}, 
Iwaniec \cite{I} and Astala, Iwaniec and Saksman \cite{AIS}, as well as the most recent works on the topic (see \cite{CMO}, \cite{Ko}, \cite{CC}).
In addition, in this note we will follow ideas from Clop et al. \cite{CFMOZ} and Baratchart et al. \cite{B}.

\section{Distributional solutions are true solutions}
We will begin proving that distributional solutions of \eqref{genBeltrami} under the hypothesis from Theorem \ref{main} have first derivatives locally
integrable. However, to get Theorem \ref{main} we don't use the Stoilow Theorem, which was fundamental in the approach of \cite{CFMOZ}.

\begin{thm}\label{firstDerivatives}
 Let $1<r<p\le\infty$ and let $\mu, \nu \in W^{1,p}(\mathbb C)$ be compactly supported such that $\| |\mu| + |\nu| \|_{\infty}= \frac{K-1}{K+1}$ for some $K\ge 1$.
 Fix $h\in L^{\frac{r}{r-1}}_{loc}(\mathbb C)$. 
 Assume that  $f\in L^{\frac{r}{r-1}}_{loc}(\mathbb C)$ is a distributional solution of 
 \begin{equation}\label{homo1}
 \deltabarra f(z) -\mu(z)\,\partial f(z) - \nu(z)\,\overline{\partial f(z)} = h.
\end{equation}
Then we have
\begin{enumerate}
\item $f\in  W^{1,\frac{r}{r-1}}_{loc}(\mathbb C)$, if $p\ge2$.
\item $f\in  W^{1,2}_{loc}(\mathbb C)$, if $\frac{2K}{K+1}<p<2$ and $\frac{1}{r} > \frac{1}{p}+ \frac{K-1}{2K}$.
\end{enumerate}
\end{thm}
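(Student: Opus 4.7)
The plan is to rewrite equation \eqref{homo1} as the operator equation
\[(\Id-\mu\B-\nu\overline{\B})(\bar\partial f)=h\]
in the distributional sense, and then to use invertibility of the Beltrami operator $\Id-\mu\B-\nu\overline{\B}$ on a suitable $L^q(\C)$ to force $\bar\partial f$ into the claimed Sobolev class. As a preparation, I would localize: pick $\eta\in C_c^\infty(\C)$ identically $1$ on a prescribed ball and verify, using the Leibniz rule distributionally (legitimate because the Sobolev regularity of $\mu,\nu$ makes them admissible multipliers on $\partial f$ and $\overline{\partial f}$), that $F=\eta f$ is again a distributional solution of
\[\bar\partial F-\mu\,\partial F-\nu\,\overline{\partial F}=\tilde h,\qquad \tilde h:=\eta h+(\bar\partial\eta)f-\mu(\partial\eta)f-\nu\overline{(\partial\eta)f},\]
with $F$ and $\tilde h$ compactly supported and globally integrable.

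Next I would invert the Beltrami operator on the appropriate space. In case (1), $p\ge 2$ gives $\mu,\nu\in\mathrm{VMO}$, so the theorem of Iwaniec recalled in the introduction shows that $\Id-\mu\B-\nu\overline{\B}$ is an isomorphism of every $L^q(\C)$, $q\in(1,\infty)$, and we can solve $(\Id-\mu\B-\nu\overline{\B})v=\tilde h$ in $L^{r/(r-1)}(\C)$. In case (2), $p<2$, the contraction estimate $\|\mu\B+\nu\overline{\B}\|_{L^2\to L^2}\le k<1$ yields invertibility on $L^2(\C)$ by Neumann series; a H\"older argument combining $h\in L^{r/(r-1)}_{loc}$ with the Sobolev embedding $W^{1,p}\hookrightarrow L^{2p/(2-p)}$ to handle the product terms $\mu(\partial\eta)f$ and $\nu\overline{(\partial\eta)f}$ places $\tilde h$ in $L^2(\C)$, so we obtain $v\in L^2(\C)$. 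The threshold $1/r>1/p+(K-1)/(2K)$ in case (2) is precisely what is needed to keep all the subsequent duality pairings inside the invertibility range of the Beltrami operator.

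Setting $\tilde f=\cC v$ now produces a \emph{strong} solution of the inhomogeneous Beltrami equation with the same right-hand side $\tilde h$: one has $\bar\partial\tilde f=v$ and $\partial\tilde f=\B v$ in the correct $L^q$, so $\tilde f\in W^{1,r/(r-1)}_{loc}(\C)$ in case (1) and $\tilde f\in W^{1,2}_{loc}(\C)$ in case (2). The difference $G=F-\tilde f$ is then a distributional solution of the \emph{homogeneous} Beltrami equation, lying in $L^{r/(r-1)}_{loc}(\C)$. To close the argument one needs a Weyl-type uniqueness: any such $G$ must be entire. I would prove this by mollification, $G_\varepsilon=G*\rho_\varepsilon$: this smooth function satisfies the Beltrami equation up to the commutator errors $[\mu,\rho_\varepsilon*]\partial G+[\nu,\rho_\varepsilon*]\overline{\partial G}$, which can be shown to vanish in the relevant norm as $\varepsilon\to 0$ thanks to $\mu,\nu\in W^{1,p}$. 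Passing to the limit places $\bar\partial G$ inside the invertibility range of $\Id-\mu\B-\nu\overline{\B}$, forcing $\bar\partial G\equiv 0$; then $G$ is entire and $F=\tilde f+G$ has the claimed Sobolev regularity on the ball where $\eta\equiv 1$.

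The hardest part will be the Weyl-type uniqueness in the last step: $\bar\partial G$ is a priori only a distribution, the pointwise multiplications by $\mu$ and $\nu$ must be interpreted with care, and the commutator analysis has to balance the Sobolev regularity of the coefficients against the only weak integrability of $G$. The exponent threshold $1/r>1/p+(K-1)/(2K)$ in case (2) is exactly what is needed to make all the pairings close up and to bring the invertibility of the Beltrami operator to bear in the limit.
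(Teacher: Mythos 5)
Your strategy is genuinely different from the paper's, and after the deferred steps are filled in it does work; the exponents close up exactly as you predict. The paper never mollifies and never proves a Weyl-type uniqueness statement for the homogeneous equation. Instead, after the same localization $F=\psi f$, it forms the auxiliary function $G=(\Id-\mu\,\B-\nu\,\mathbf{C})F$, observes that in the distributional identity for $\overline{\partial}G$ all derivatives fall on $\mu$ and $\nu$, so that $\overline{\partial}G$ is an honest function in $L^s$ with $\frac1s=\frac1p+\frac1{r'}$ (hence $G\in W^{1,s}_c$), and then transfers this regularity back to $\partial F$ by a duality argument with the operators $T$ and $T^*$ of Lemma \ref{invert}; only once $F\in W^{1,s}_c$ for some $s>1$ is known does it use $\B\overline{\partial}F=\partial F$ and invert $\Id-\mu\B-\nu\overline{\B}$ on $L^2$ (resp.\ $L^{r'}$). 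Your route buys a cleaner conceptual reduction (particular solution plus uniqueness), at the price of having to prove the uniqueness statement, which is really the whole content of the theorem.

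Three points in your sketch must be made precise, and they can be. (i) The commutator step is Friedrichs' lemma: for $\mu\in W^{1,p}$ and $G\in L^{r'}_{loc}$ with $r<p$, one has $\mu\,\partial(\rho_\varepsilon*G)-\rho_\varepsilon*(\mu\,\partial G)\to 0$ in $L^s_{loc}$ with $\frac1s=\frac1p+\frac1{r'}$ --- the same exponent $s=\frac{pr'}{p+r'}$ as in the paper. You must then check that $s$ lies in the invertibility window of \cite{AIS}: in case (2) the hypothesis $\frac1r>\frac1p+\frac{K-1}{2K}$ gives exactly $\frac1s<\frac{K+1}{2K}$, i.e.\ $\frac{2K}{K+1}<s<p<2$, while in case (1) one only needs $s>1$ (true since $r<p$) together with $W^{1,p}(\C)\subset\operatorname{VMO}$ for $p\ge2$ and Iwaniec's theorem. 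You assert the threshold "is exactly what is needed" but this computation is the crux and must appear. (ii) To pass from the pointwise identity for $G_\varepsilon=\rho_\varepsilon*G$ to the operator identity $(\Id-\mu\B-\nu\overline{\B})(\overline{\partial}G_\varepsilon)=e_\varepsilon$, needed to apply the inverse and get $\|\overline{\partial}G_\varepsilon\|_{L^s}\le C\|e_\varepsilon\|_{L^s}\to0$, you need $\partial G_\varepsilon=\B(\overline{\partial}G_\varepsilon)$; this holds because $F$ and $v$ are compactly supported (the latter since $\mu,\nu$ are), whence $G=\cC(\overline{\partial}G)$ and $\overline{\partial}G_\varepsilon=\rho_\varepsilon*\overline{\partial}G\in C^\infty_c$. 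Without this representation the mollified equation cannot be fed to the inverse operator. (iii) The conclusion of the uniqueness step is that $\overline{\partial}G=0$ and hence $G=\cC(\overline{\partial}G)\equiv0$; the slogan "any such $G$ must be entire" is misleading (solutions of the homogeneous Beltrami equation are quasiregular, not holomorphic), but your actual mechanism --- injectivity of $\Id-\mu\B-\nu\overline{\B}$ on $L^s$ --- is the right one.
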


\begin{proof}
The scheme of the proof is as follows. First of all we localize the solution $f$ to get another generalized Beltrami equation where all the involved
terms will have compact support. Then we will check that a bounded combination of this located function have first derivatives. Finally,
we transfer the integrability of the first derivatives to the original solution $f$.

\noindent Consider an smooth function with compact support $\psi\in C^\infty_c\left(\C\right)$ with real values and define
$F\,=\,\psi\,f \in L^\frac{r}{r-1}_c\left(\C\right)$. Obviously, the global regularity on $F$ will imply the local regularity on $f$, as we wish.

\noindent Using \eqref{homo1}, $\mu, \nu\in W_c^{1,p}(\mathbb C)$ and $\psi$ is real valued, one easly chekcs that $F$ is a distributional solution of
\begin{equation}\label{eq para F normalizada}
\overline{\partial}F\,=\,\mu\,\partial F\, + \,\nu\,\overline{\partial F}\,+\,H\,,
\end{equation}
where $H =\,\left(\,f\,-\,\nu\,\overline{f}\,\right)\, \overline{\partial} \psi\, -\, \mu\, f \partial\psi\, +\, h\psi $
belongs to $L^\frac{r}{r-1}_c\left(\C\right)$.

\noindent Now we define $G\,=\,\left(\,\Id\,-\,\mu\,\mathcal{B}\,-\,\nu\,\mathbf{C}\,\right)\,F$. The distributional derivative $\overline{\partial}G$ of $G$ 
has sense because $G\in L^{\frac{r}{r-1}}_c\left(\C\right)$
(the compact support of $G$ comes from the compact support of $F$, $\mu$ and $\nu$). Then for any $\varphi\in C^\infty_c\left(\C\right)$ we have 
$$\aligned
\left\langle\,\overline{\partial} G\,,\,\varphi \,\right\rangle\,&=\,-\,\left\langle\,G \,,\,\partial\varphi \,\right\rangle\,=\,-\,\left\langle\,F\,
-\,\nu\,\overline{F}\,-\,\mu\, \mathcal{B}F \,,\,\partial\varphi \,\right\rangle\\
&=\,-\,\left\langle\,F \,,\,\partial\varphi \,\right\rangle\,+\,\left\langle\,\nu\,\overline{F} \,,\,\partial\varphi \,\right\rangle\,
+\,\left\langle\,\mu\,\mathcal{B}F \,,\,\partial\varphi \,\right\rangle\\
&=\,\left\langle\,\overline{\partial}F \,,\,\varphi \,\right\rangle\,-\,\left\langle\,\overline{\partial F} \,,\,\overline{\nu}\,\varphi \,\right\rangle\,
-\,\left\langle\,\overline{F}\,\overline{\partial}\nu \,,\,\varphi \,\right\rangle\,+\,\left\langle\,\mathcal{B}F \,,\,
\partial\left(\,\overline{\mu}\,\varphi\,\right) \,\right\rangle\,-\,\left\langle\,\overline{\partial}\mu\,\mathcal{B}F\,,\,\varphi\,\right\rangle\\
&=\,\left\langle\,\overline{\partial}F \,,\,\varphi \,\right\rangle\,-\,\left\langle\,\overline{\partial F} \,,\,\overline{\nu}\,\varphi \,\right\rangle\,
-\,\left\langle\,\overline{F}\,\overline{\partial}\nu \,,\,\varphi \,\right\rangle\,
-\,\left\langle\,\overline{\partial}\partial\mathcal{C} F \,,\,\overline{\mu}\,\varphi\,\right\rangle\,
-\,\left\langle\,\overline{\partial}\mu\,\mathcal{B}F\,,\,\varphi\,\right\rangle\\
&=\,\left\langle\,\overline{\partial}F \,,\,\varphi \,\right\rangle\,-\,\left\langle\,\overline{\partial F} \,,\,\overline{\nu}\,\varphi \,\right\rangle\,
-\,\left\langle\,\overline{F}\,\overline{\partial}\nu \,,\,\varphi \,\right\rangle\,
-\,\left\langle\,\partial F \,,\,\overline{\mu}\,\varphi\,\right\rangle\,-\,\left\langle\,\overline{\partial}\mu\,\mathcal{B}F\,,\,\varphi\,\right\rangle\\
&=\,\left\langle\,H\,-\,\overline{F}\,\overline{\partial}\nu\,-\,\left(\,\mathcal{B}F\,\right)\,\overline{\partial}\mu\,,\,\varphi\,\right\rangle.
\endaligned$$
Here we have used $\overline{\partial}\mathcal{C}F=F$ in the fifth line and \eqref{eq para F normalizada} in the last one. 
Consequently $G$ is a distributional solution of 
\begin{equation}\label{G es sol gener L->W}
\overline{\partial}G\,=\,H\,-\,\overline{F}\,\overline{\partial}\nu\,-\,\left(\,\mathcal{B}F\,\right)\,\overline{\partial}\mu\,
\end{equation}
and clearly $\overline{\partial}G\in L^s_{c}\left(\C\right)$ with $s=\frac{pr'}{p+r'}$. 
Applying the Cauchy transform to the equation \eqref{G es sol gener L->W} we get (because $G$ has compact support)
$$
G\,= \mathcal{C}\left(\,H\,-\,\overline{F}\,\overline{\partial}\nu\,-\,\left(\,\mathcal{B}F\,\right)\,\overline{\partial}\mu\,\right)
$$
and then
$$ 
\partial G\,=\,\mathcal{B}\left(\,H\,-\,\overline{F}\,\overline{\partial}\nu\,-\,\left(\,\mathcal{B}F\,\right)\,
\overline{\partial}\mu\,\right)\in L^s\left(\,\C\,\right)\,.
$$
Therefore, $G\in W^{1,s}_c\left(\,\C\,\right)$. 

\vspace{0,3cm}

\noindent \textbf{Remark.} If $\mu \equiv 0$ then $G= (\Id -\nu \,\mathbf{C})F$, that is, $F=\dfrac{G-\nu\overline{G}}{1-|\nu|^2} \in W^{1,s}_c\left(\,\C\,\right)$.
This reinforces the fact already observed in \cite{B} that the study of the regularity in the conjugate Beltrami equation 
is easier than in the complex Beltrami equation.

\vspace{0,3cm}

\noindent We want to transfer the regularity from $G$ to $F$, and therefore to $f$.
We'll work with some identities with the  distributional derivatives of $F$ and finally we'll conclude that $\partial F$ is an integrable function.
For all $\varphi\in C^\infty_c\left(\,\C\,\right)$ we have
$$\begin{aligned}
\left\langle\,\partial G\,,\,\varphi \,\right\rangle\,&=\,-\,\left\langle\,G \,,\,\overline{\partial}\varphi \,\right\rangle\,=\,\left\langle\,-\,F\,
+\,\nu\,\overline{F}\,+\,\mu\,\left(\,\mathcal{B}F\,\right)\,,\,\overline{\partial}\varphi\,\right\rangle\\
&=\,\left\langle\,\partial F \,,\,\varphi \,\right\rangle\,
+\,\left\langle\,\overline{F} \,,\,\overline{\partial}\left(\,\overline{\nu}\,\varphi\,\right) \,\right\rangle\,
+\,\left\langle\,\mathcal{B}F \,,\,\overline{\partial}\left(\,\overline{\mu}\,\varphi\,\right) \,\right\rangle\,
-\,\left\langle\,\overline{F}\,\partial\nu \,,\,\varphi \,\right\rangle\,
-\,\left\langle\,\left(\,\mathcal{B}F\,\right)\,\partial\mu \,,\,\varphi \,\right\rangle .\\
\end{aligned}$$
Now, we replace $\langle \,\overline{F}\,,\,\overline{\partial}\left(\,\overline{\nu}\,\varphi\,\right)\,\rangle$ 
using that $\partial\overline{F}=\overline{\overline{\partial}F}$,  moreover $\overline{\partial}F$ satisfies \eqref{eq para F normalizada} 
and the expression 
$$
\langle\partial\, \overline{F}\,,\,\phi \,\rangle \,= \,\langle\,\overline{\partial F}\,,\,\mu\,\phi\,\rangle\, 
+\, \langle \,\partial F\,,\,\nu\,\phi\,\rangle \,+\, \langle\, \overline{H}\,,\,\phi\,\rangle
$$
holds for any $\phi\in W^{1,p}_c\left(\C\right)\cap L^\infty\left(\C\right)$. After this change we reach that
$$\begin{aligned}
\left\langle\,\partial G\,,\,\varphi \,\right\rangle\,
&=\,\left\langle\,\partial F \,,\,\varphi \,\right\rangle\,-\,\left\langle\,\partial F \,,\,\left|\,\nu\,\right|^2 \,\varphi\,\right\rangle\,-\,\left\langle\,\overline{\partial F} \,,\,\mu\,\overline{\nu}\,\varphi \,\right\rangle\,+\,\left\langle\,\mathcal{B} F \,,\,\overline{\partial}\left(\,\overline{\mu}\,\varphi\,\right) \,\right\rangle\\
&\hspace{3cm}\,- \,\left\langle\,\nu\,\overline{H} \,,\,\varphi \,\right\rangle\,-\,\left\langle\,\overline{F}\,\partial\nu\,+\,\left(\,\mathcal{B}F\,\right)\,\partial\mu \,,\,\varphi \,\right\rangle .
\end{aligned}$$
Simplifying and rearranging terms,
$$
\begin{aligned}
& \left\langle \partial \,F\,,\, \left(\,1\,-\,|\nu|^2\,\right)\,\varphi\,\right\rangle \,
+\,\left\langle \,\mathcal{B} F\,,\,\overline{\partial} \left(\,\overline{\mu}\,\varphi\,\right)\,\right\rangle\,
-\,\left\langle \,\overline{\partial F}\,,\, \,\mu\,\overline{\nu}\,\varphi\,\right\rangle\,= \\
& = \left\langle\,\partial G\,
-\,\nu\,\overline{H}\,-\,\overline{F}\,\partial\nu\,-\,\left(\,\mathcal{B}F\,\right)\,\partial\mu\,,\,\varphi\,\right\rangle.
\end{aligned}
$$
Furthermore, this equality extends to any $\varphi$ in $W^{1,p}_c\left(\C\right)\cap L^\infty\left(\C\right)$. 
In particular, for functions of type $\varphi=\frac{\Psi}{1-|\nu|^2}$ with $\Psi\in C^\infty_c\left(\C\right)$ arbitrary. 
Thus,
\begin{multline*}
 \left\langle \partial\, F\,,\, \Psi\,\right\rangle \,-\,\left\langle \,\partial\,\mathcal{B}  F\,,\,\frac{\overline{\mu}}{1-|\nu|^2}\Psi\,\right\rangle\,
-\,\left\langle \,\overline{\partial F}\,,\, \frac{\mu\overline{\nu}}{1-|\nu|^2}\,\Psi\,\right\rangle\, \\
=\,\left\langle\,\frac{\partial G\,-\,\nu\,\overline{H}\,-\,\overline{F}\,\partial\nu\,
-\,\left(\,\mathcal{B}F\,\right)\,\partial\mu}{\,1\,-\,\left|\,\nu\,\right|^2}\,,\,\Psi\,\right\rangle\,.
\end{multline*}
Recall that $\partial\B F= \B\partial F$ and $\B^*$ is the adjoint operator of $\B$. Rewriting the above equality we get
\begin{multline}\label{segunda aprox distrib BeG}
\left\langle\,\partial F\,,\,\left(\,\Id\,-\,\mathcal{B^*}\,\frac{\overline{\mu}}{\,1\,-\,\left|\,\nu\,\right|^2}\,-\,\mathbf{C}\,
\frac{\mu\,\overline{\nu}}{\,1\,-\,\left|\,\nu\,\right|^2}\,\right)\,\Psi\,\right\rangle\, \\
=\,\left\langle\,\frac{\partial G\,-\,\nu\,\overline{H}\,-\,\overline{F}\,\partial\nu\,-\,
\left(\,\mathcal{B}F\,\right)\,\partial\mu}{\,1\,-\,\left|\,\nu\,\right|^2}\,,\,\Psi\,\right\rangle\,.
\end{multline}
Now the invertibility of the operator
$$
T:= \,\Id\,-\,\mathcal{B^*}\,\frac{\overline{\mu}}{\,1\,-\,\left|\,\nu\,\right|^2}\,-\,\mathbf{C}\,
\frac{\mu\,\overline{\nu}}{\,1\,-\,\left|\,\nu\,\right|^2}
$$
and its adjoint
$$
T^*:= \,\Id\,-\,\frac{\mu}{\,1\,-\,\left|\,\nu\,\right|^2}\, \mathcal{B}\,
-\, \frac{\overline{\mu}\,\nu}{\,1\,-\,\left|\,\nu\,\right|^2}\, \mathbf{C}
$$
come into play (see Lemma \ref{invert} below).

\noindent Second part of Theorem \ref{firstDerivatives}.
Now, since $\frac{2K}{K+1}<p<2$  and  $\frac{1}{r}>\frac{1}{p}+\frac{K-1}{2K}$ we have
$$\frac{2K}{K+1}<s=\frac{pr'}{p+r'}<2<s'<\frac{2K}{K-1}\,.
$$
Thus, by Lemma \ref{invert}, $T$ and $T^*$ are invertible on $L^s(\mathbb C)$ and on $L^{s'}(\mathbb C)$.
The right hand side of \eqref{segunda aprox distrib BeG} extends by duality to $\Psi\in L^{s'}\left(\C\right)$. 
Then we use as test function $\Psi:= T^{-1}\, \varphi$ with $\varphi\in C^\infty_c\left(\C\right)$  arbitrary. In this case we obtain
$$
\left\langle\,\partial F\,,\,\varphi\,\right\rangle=
\left\langle\,\frac{\partial G\,-\,\nu\,\overline{h}\,-\,\overline{F}\,\partial\nu\,-\,\left(\,\mathcal{B}F\,\right)
\,\partial\mu}{\,1\,-\,\left|\,\nu\,\right|^2}\,, \,T^{-1}\varphi\,\right\rangle\,,
$$
or equivalently, using the adjoint operator,
$$
\left\langle\,\partial F\,,\,\varphi\,\right\rangle=
\left\langle\,( T^*)^{-1}\,\left(\,\frac{\partial G\,-\,\nu\,\overline{h}\,-\,\overline{F}\,\partial\nu\,-\,
\left(\,\mathcal{B}F\,\right)\,\partial\mu}{\,1\,-\,\left|\,\nu\,\right|^2}\,\right)\,,\,\varphi\,\right\rangle\,.
$$
This equality means that $\partial F$ belongs to  $L^s\left(\,\C\,\right)$. 
Therefore, thanks to \eqref{eq para F normalizada}, we have $F\in W^{1,s}_c\left(\C\right)$. 
Once $F$ has first derivatives in $ L ^ s (\ C) $ for some $ s> 1 $, we can ensure that
$\mathcal{B}\overline{\partial}F\,=\,\partial F$ and write
\begin{equation}\label{FiH}
 \left(\,\Id\,-\,\mu\,\mathcal{B}\,\,-\,\nu\,\overline{\mathcal{B}}\,\right)\,\overline{\partial} F \,=\,H\, ,
 \quad \text{ with $H\in L^t_c\left(\C\right)$ for all $t\leq\frac{r}{r-1}$.}
\end{equation}
Since $r<p<2$ one has $\dfrac{r}{r-1}>2$. For $t=2$ we use the invertibility of the Beltrami operator,
$$
\overline{\partial} F \,=\,\left(\,\Id\,-\,\mu\,\mathcal{B}\,\,-\,\nu\,\overline{\mathcal{B}}\,\right)^{-1}\,H\,
$$
to get $F\in W^{1,2}_{c}(\C)$, and so  $f\in W^{1,2}_{loc}(\C)$ as we wished.

\vspace{0,3cm}

\noindent First part of Theorem \ref{firstDerivatives}. In this case the operators $T$ and $T^*$ are invertible on $L^q (\C)$ for all $q\in (1,\infty)$.
Repeating the previous argument we arrive to \eqref{FiH} and then
$$
\overline{\partial} F \,=\,\left(\,\Id\,-\,\mu\,\mathcal{B}\,\,-\,\nu\,\overline{\mathcal{B}}\,\right)^{-1}H\, \in 
L^\frac{r}{r-1}(\C) ,
$$
implying that $f\in W^{1,\frac{r}{r-1}}_{loc}\left(\C\right)$.

\end{proof}

\begin{lem}\label{invert}
Let $\mu, \nu \in L^{\infty}(\mathbb C)$ be compactly supported such that $\| |\mu| + |\nu| \|_{\infty}= k<1$.
Let $T$ and $T^*$ the two operators  previously defined.
\begin{enumerate}[(a)]
\item Both operators are invertible on $L^t (\mathbb C)$ when $1+k=\frac{2K}{K+1}<t<\frac{2K}{K-1}=1+1/k$.
 \item If in addition, $\mu, \nu \in \operatorname{VMO}(\mathbb C)$ then both operators are invertible on $L^t (\mathbb C)$
 for any $t\in (1,\infty)$.
\end{enumerate}
\end{lem}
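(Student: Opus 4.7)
The plan is to reduce the operator $T^*$ (and hence, by duality, $T$) to a standard Beltrami operator of the form $\Id - \tilde\mu\,\B - \tilde\nu\,\overline{\B}$ whose coefficients still satisfy the ellipticity bound $\bigl\||\tilde\mu|+|\tilde\nu|\bigr\|_\infty \leq k$. Once this reduction is in place, part (a) becomes a direct application of the Astala--Iwaniec--Saksman invertibility theorem for standard Beltrami operators on $L^t(\C)$, and part (b) follows from the $\operatorname{VMO}$ version due to Iwaniec.

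Setting $a := \mu/(1-|\nu|^2)$ and $b := \bar\mu\,\nu/(1-|\nu|^2)$, the equation $T^* f = g$ reads $f - a\,\B f - b\,\bar f = g$. Taking complex conjugates yields the companion equation $\bar f - \bar a\,\overline{\B} f - \bar b\,f = \bar g$. Adding $b$ times the companion to the original eliminates the $b\bar f$ term, and dividing through by $\Delta := 1 - |b|^2 \geq 1 - k^2 > 0$ produces
\begin{equation*}
(\Id - \tilde\mu\,\B - \tilde\nu\,\overline{\B})\,f \,=\, \tilde g, \qquad \tilde\mu := \frac{a}{\Delta},\ \tilde\nu := \frac{b\,\bar a}{\Delta},\ \tilde g := \frac{g + b\,\bar g}{\Delta}.
\end{equation*}
Since the underlying $2\times 2$ matrix $\bigl(\begin{smallmatrix}1 & -b \\ -\bar b & 1\end{smallmatrix}\bigr)$ has nonzero determinant $\Delta$, this manipulation is reversible, and the reduced equation is equivalent to the original $T^*f = g$. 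A short computation then gives the sharp coefficient bound
\begin{equation*}
|\tilde\mu|+|\tilde\nu| \,=\, \frac{|a|\,(1+|b|)}{\Delta} \,=\, \frac{|a|}{1-|b|} \,=\, \frac{|\mu|}{(1-|\nu|^2) - |\mu||\nu|} \,\leq\, \frac{k-|\nu|}{1 - k|\nu|} \,\leq\, k,
\end{equation*}
the last step using that $s \mapsto (k-s)/(1-ks)$ is decreasing on $[0,k]$.

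The conclusion is then immediate. For (a), the standard Beltrami operator $\Id - \tilde\mu\,\B - \tilde\nu\,\overline{\B}$ is invertible on $L^t(\C)$ for every $t \in (1+k, 1+1/k) = \bigl(\tfrac{2K}{K+1}, \tfrac{2K}{K-1}\bigr)$ by Astala--Iwaniec--Saksman, hence so is $T^*$; invertibility of $T$ on the same range follows by duality, the interval being self-dual under $t \mapsto t/(t-1)$. For (b), if $\mu,\nu \in \operatorname{VMO}$ then $\tilde\mu, \tilde\nu \in \operatorname{VMO} \cap L^\infty$ as well, since $\operatorname{VMO} \cap L^\infty$ is closed under multiplication and under composition with $C^\infty$ functions of bounded arguments (while $\Delta \geq 1-k^2 > 0$ is bounded away from zero); Iwaniec's theorem quoted in the introduction then gives invertibility on $L^t$ for every $t \in (1,\infty)$. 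The one substantive step is the algebraic reduction itself: checking that it is a genuine equivalence, and that the resulting coefficients still satisfy the sharp bound $k$ (so that no range of $t$ is lost), is what the computation above accomplishes.
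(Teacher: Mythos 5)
Your proposal is correct and follows essentially the same route as the paper: your elimination of the $b\bar f$ term via the conjugated equation is exactly the paper's factorization $T^*=(\Id-\nu_1\mathbf{C})(\Id-\mu_2\B-\nu_2\overline{\B})$ with $\tilde\mu=\mu_2$, $\tilde\nu=\nu_2$, followed by the same appeals to Astala--Iwaniec--Saksman and to Iwaniec's $\operatorname{VMO}$ theorem. The only cosmetic differences are that you verify the bound $|\tilde\mu|+|\tilde\nu|\le k$ in one step rather than the paper's two, and you treat $T$ by duality (using that the critical interval is self-dual) where the paper simply repeats the argument.
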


\begin{proof}

 We only proceed with the operator $T^*$, because the argument for $T$ is the same. Define
 $$
 \mu_1\,:=\,\frac{\mu}{1-\left|\nu\right|^2}\hspace{1cm}\text{y}\hspace{1cm}\nu_1\,:=\,\frac{\nu\,\overline{\mu}}{1-\left|\nu\right|^2}\,.
$$
Clearly $\| |\mu_1| + |\nu_1| \|_{\infty}= \| \frac{|\mu|}{1-|\nu |}\|_{\infty} \le k$ because $\| |\mu| + |\nu| \|_{\infty}= k<1$. Then
\begin{align*}
T^*= \left(\, \Id\,-\,\mu_1\,\mathcal{B} -\,\nu_1\,\mathbf{C}\,\right)\,  & =\,\left(\,\Id\,-\,\nu_1\,\mathbf{C}\,\right) 
\left(\,\Id\,-\,\frac{\mu_1}{1-\left|\nu_1\right|^2}\,\mathcal{B}\,-\,\frac{\nu_1\,\overline{\mu_1}}{1-\left|\nu_1\right|^2}\,\mathbf{C}{\mathcal{B}}\,\right) \\
& =\,\left(\,\Id\,-\,\nu_1\,\mathbf{C}\,\right)
\left(\,\Id\,-\,\mu_2\,\mathcal{B}\,-\,\nu_2\,\mathbf{C}{\mathcal{B}}\,\right),
\end{align*}
where the last equality defines $\mu_2$ and $\nu_2$.
Since the operator $\left (\Id - \nu_1 \, \mathbf {C} \right) $ is invertible in all spaces 
$L^t\left(\C\right)$, $1\leq t\leq\infty$,
the invertibility of $T^*$ is reduced to the invertibility of the Beltrami operator 
$(\Id\,-\,\mu_2\,\mathcal{B}\,-\,\nu_2\,\mathbf{C}{\mathcal{B}})$, 
which follows from the outcome of \cite{AIS}.

\noindent When $\mu ,\nu \in \operatorname{VMO}\cap L^{\infty}_c$ with $\| |\mu| + |\nu| \|_{\infty}= k<1$, it is an easy calculation to check that
$\mu_1 ,\nu_1 \in \operatorname{VMO}\cap L^{\infty}_c$ and so also $\mu_2$ and $\nu_2$. Therefore the invertibility of 
$(\Id\,-\,\mu_2\,\mathcal{B}\,-\,\nu_2\,\mathbf{C}{\mathcal{B}})$ on $L^t (\mathbb C)$ for any $t\in (1,\infty)$
follows from \cite{I}.
 
\end{proof}

\section{Proof of Theorem \ref{main}}

Consider $\psi\in C^\infty_c\left(\C\right)$ with real values and define
$F\,=\,\psi\,f \in L^\frac{r}{r-1}_c\left(\C\right)$.
Our goal is to show that $F$ has second derivatives in some Lebesgue space,
and then $f$ will have the same local regularity.
The method consists in applying several times the invertibility of the generalized Beltrami operators.
For Parts 1 and 2 we rely on the invertibility of the Beltrami operator on some Sobolev spaces.
On the other hand, in Part 3 we can only use this invertibility in Lebesgue spaces.
We'll prove each part separately.

\noindent Part 1. Fix $p>2$. By Theorem \ref{firstDerivatives} we know  $F\in W^{1,\frac{r}{r-1}}_c\left(\C\right)$ 
and satifies  \eqref{eq para F normalizada}
$$
\overline{\partial}F\,=\,\mu\,\partial F\, + \,\nu\,\overline{\partial F}\,+\,H\,,
$$
where 
\begin{equation}\label{defH}
 H =\,\left(\,f\,-\,\nu\,\overline{f}\,\right)\, \overline{\partial} \psi\, -\, \mu\, f \,\partial\psi
\end{equation}
belongs to $L^\frac{r}{r-1}_c\left(\C\right)$.
Following this scheme we also get $f\in W^{1,\frac{r}{r-1}}_{loc}\left(\C\right)$. 
Therefore, by the Sobolev embedding, we have $f\in L^\frac{2r}{r-2}_{loc}\left(\C\right)$ if $r>2$, $f\in L^q_{loc}\left(\C\right)$ 
for all $q<\infty$ if $r=2$ and $f\in L^\infty_{loc}\left(\C\right)$ if $r<2$. Consequently, $H\in L^\frac{2r}{r-2}_c\left(\C\right)$ when $r>2$, 
$H\in L^q_c\left(\C\right)$ for all $q<\infty$ if $r=2$ and $H\in L^\infty_c\left(\C\right)$ when $r<2$. 
From the identity
\begin{equation}\label{recursiu}
\overline{\partial}F\,=\,\left(\,\Id\,-\mu\,\mathcal{B}\,-\,\nu\,\overline{\mathcal{B}}\,\right)^{-1}\,H\,
\end{equation}
and by the invertibility of the Beltrami operator we have $F\in W^{1,t}_c\left(\C\right)$ for some $t>2$ regardless of the value of $r$. 
Then, $F\in L^\infty_{c}\left(\C\right)$, $f\in L^\infty_{loc}\left(\C\right)$ and so $H\in L^\infty_c\left(\C\right)$. 
Re-using the invertibility of the operator, this time we get $F\in W^{1,q}_c\left(\C\right)$ for all $q<\infty$. 
Immediately we have $f\in W^{1,q}_{loc}\left(\C\right)$ for all $q<\infty$. With this, it is easy to verify that
$H\in W^{1,p}_c\left(\C\right)$. 

\noindent Now, since $p>2$, we know (see \cite[Section 3]{CMO} and the argument in \cite[Lemma 10]{CC}) that the generalized Beltrami operator
$$
\Id\,-\,\mu\,\mathcal{B}\,-\,\nu\,\overline{\mathcal{B}}:\,W^{1,p}\left(\,\C\,\right)\,\to\,W^{1,p}\left(\,C\,\right)
$$
is bounded and boundedly invertible. From \eqref{recursiu} $\overline{\partial}F \in W^{1,p}_c\left(\C\right)$
(and $\partial F = \B (\overline{\partial} F )$ too), 
that is, $f\in W^{2,p}_{loc}\left(\C\right)$.\\

\noindent Part 2. When $p=2$ the proof follows the same argument as in Part 1. Just keep in mind that in this case (see Lemma \ref{inverL2} below)
the operator
$$
\Id\,-\,\mu\,\mathcal{B}\,-\,\nu\,\overline{\mathcal{B}}:\,W^{1,q}\left(\,\C\,\right)\,\to\,W^{1,q}\left(\,C\,\right)
$$
is bounded and boundedly invertible for all $q\in(1,2)$ (but not for $q=2$). Thus, $f\in W^{2,q}_{loc}\left(\C\right)$ for all $q<2$ as we claimed.\\

\noindent For a moment, we consider Part 3 and Part 4 together. Recall that $\frac{2K}{K+1}<p<2$ and $\frac{1}{r} > \frac{1}{p}+ \frac{K-1}{2K}$. 
By Theorem \ref{firstDerivatives} we know  $F\in W^{1,2}_c\left(\C\right)$ and
$f\in W^{1,2}_{loc}\left(\C\right)$. Then, the function $H$ given by \eqref{defH} belongs to $L^t_{c}(\C)$ for all $t<\infty$.
The invertibility of $\Id\,-\,\mu\,\mathcal{B}\,-\,\nu\,\overline{\mathcal{B}}$ and \eqref{recursiu} give $\overline{\partial} F\in L^q_c(\C)$
(and so, $\partial F$ too) for all $2\le q<\frac{2K}{K-1}$. In particular we obtain $f\in L^{\infty}_{loc}(\C)$ and with that we achieve 
$H\in W^{1,p}_c(\C)\cap L^{\infty}$.

\noindent Define $G:=\partial F$. Our goal is to check that the distribution $\overline{\partial} G$ is in fact an integrable function.
Consider $\Psi\in C^\infty_c\left(\C\right)$ arbitrary, then it is true that
$$
\begin{aligned}
\left\langle\, \overline{\partial} G\,,\,\Psi \,\right\rangle\,&=\,-\left\langle\,G \,,\,\partial\Psi \,\right\rangle\,
=\,-\left\langle\,\partial F \,,\, \partial\Psi\,\right\rangle\, =\,-\left\langle\,\B(\overline{\partial} F) \,,\, \partial\Psi\,\right\rangle\, \\
&  =\,-\left\langle\,\overline{\partial} F \,,\, \B^*(\partial\Psi)\,\right\rangle\,
=\,-\left\langle\, \overline{\partial}F\,,\,\overline{\partial} \Psi \,\right\rangle
\\
&= -\left\langle\,\partial F \,,\,\overline{\mu}\,\overline{\partial} \Psi \,\right\rangle\,-\,\left\langle\,\overline{\partial F} \,,\,\overline{\nu}\,\overline{\partial} \Psi \,\right\rangle\,-\,\left\langle\,H \,,\,\overline{\partial} \Psi \,\right\rangle\\
&=-\left\langle\,G \,,\,\overline{\mu}\,\overline{\partial} \Psi \,\right\rangle\,-\,\left\langle\,\overline{G} \,,\,\overline{\nu}\,\overline{\partial} \Psi \,\right\rangle\,-\,\left\langle\,H \,,\,\overline{\partial} \Psi \,\right\rangle\\
&=\left\langle\,\partial G \,,\,\overline{\mu}\, \Psi \,\right\rangle\,+\,\left\langle\,\partial\overline{G} \,,\,\overline{\nu} \,\Psi \,\right\rangle\,+\,\left\langle\,\partial H \,,\,\Psi \,\right\rangle\\
&\hspace{2,6cm}+\,\left\langle\,G\,\partial\mu\,+\,\overline{G}\,\partial\nu\,,\,\Psi\,\right\rangle .\\
\end{aligned}
$$
That is,
\begin{equation}\label{ni idea 1}
\left\langle\, \overline{\partial} G\,,\,\Psi \,\right\rangle\,=\left\langle\,\partial G \,,\,\overline{\mu}\, \Psi \,\right\rangle\,+\,\left\langle\,\partial\overline{G} \,,\,\overline{\nu} \,\Psi \,\right\rangle\,+\,\left\langle\,G\,\partial\mu\,+\,\overline{G}\,\partial\nu\,+\,\partial H \,,\,\Psi \,\right\rangle
\end{equation}
Note that $\overline{\mu}\Psi,\,\overline{\nu}\Psi\in W^{1,p}_c\left(\C\right)$. Thanks to this we can ensure that
\begin{equation}\label{ni idea 2}
\left\langle\,\partial G\,,\,\overline{\mu}\,\Psi\,\right\rangle \,
= \,-\,\left\langle\, G\,,\,\overline{\partial}\left(\,\overline{\mu}\,\Psi\,\right)\,\right\rangle\,
=\,-\left\langle\, G\,,\,\partial\mathcal{B}^*\left(\,\overline{\mu}\,\Psi\,\right)\,\right\rangle\,
=\,\left\langle\,\overline{\partial}G\,,\,\mathcal{B}^*\left(\,\overline{\mu}\,\Psi\,\right)\,\right\rangle\,
\end{equation}
and
\begin{equation}\label{ni idea 3}
\,\left\langle\,\partial\overline{G} \,,\,\overline{\nu} \,\Psi \,\right\rangle\,=\,\left\langle\,\mathbf{C}\overline{\partial}G \,,\,\overline{\nu} \,\Psi \,\right\rangle\,=\,\left\langle\,\overline{\partial}G \,,\,\mathbf{C}\left(\,\overline{\nu} \,\Psi\,\right) \,\right\rangle\,.
\end{equation}
Plugging \eqref{ni idea 2} and \eqref{ni idea 3} into \eqref{ni idea 1}, we have
\begin{equation}\label{prelaplacian}
\left\langle\,\overline{\partial} G \,,\,\left(\,\Id\,-\,\mathcal{B^*}\overline{\mu} \,-\,\mathbf{C}\overline{\nu}\,\right)\,\Psi \,\right\rangle\,=\,\left\langle\, G\,\partial \mu\,+\,\overline{G}\,\partial\nu\,+\,\partial H \,,\,\Psi \,\right\rangle\,.
\end{equation} 
By construction, $G\,\partial\mu$, $\overline{G}\,\partial\nu$, $\partial H\in L^s_c\left(\C\right)$ for any $1>\frac{1}{s}>\frac{1}{p}+\frac{K-1}{2K}$. 
Therefore, \eqref{prelaplacian} has sense for all $\Psi\in L^{s'}\left(\C\right)$.

\noindent Part 3. Since $\mu\equiv 0$, \eqref{prelaplacian} becomes
\begin{equation*}\label{prelaplacian2}
\left\langle\,\overline{\partial} G \,,\,\left(\,\Id\,-\,\mathbf{C}\overline{\nu}\,\right)\,\Psi \,\right\rangle\,
=\,\left\langle\, \overline{G}\,\partial\nu\,+\,\partial H \,,\,\Psi \,\right\rangle\,.
\end{equation*} 
The operator $\Id\,-\,\mathbf{C}\overline{\nu} $ and its adjoint $\Id\,-\,\nu \mathbf{C}$ are obviously invertible on $L^t$ for any $t$. Take
$\Psi:= (\Id\,-\,\mathbf{C}\overline{\nu})^{-1}\varphi=\frac{\varphi +\nu\bar\varphi}{1-|\nu|^2}$ and so
\begin{equation*}
\left\langle\,\overline{\partial} G \, , \varphi \,\right\rangle\,
=\,\left\langle\, \overline{G}\,\partial\nu\,+\,\partial H \,,\, (\Id\,-\,\mathbf{C}\overline{\nu})^{-1}\varphi \,\right\rangle\,
=\,\left\langle\, (\Id\,-\,\nu \mathbf{C})^{-1}( \overline{G}\,\partial\nu\,+\,\partial H) \,,\, \varphi \,\right\rangle\,
\end{equation*} 
for all $\varphi\in L^{s'}$. In conclusion $\bar\partial \partial F=\overline{\partial} G\in L^{s}_c$ and
$f\in W^{2,s}_{loc}\left(\C\right)$ if $\frac{1}{s}>\frac{1}{p}+\frac{K-1}{2K}$.

\noindent Part 4. The operator $\Id\,-\,\mathcal{B^*}\overline{\mu} \,-\,\mathbf{C}\overline{\nu}$ and its adjoint 
$\Id\,-\,\mu \mathcal{B} \,-\,\nu\mathbf{C}$ are invertible on $L^t$ if $\frac{2K}{K+1}<t< \frac{2K}{K-1}$. This is why we have restricted the case to $K< 2$
and $K<p<2$. Taking $\Psi:= (\Id\,- \mathcal{B^*}\overline{\mu} -\,\mathbf{C}\overline{\nu})^{-1}\varphi$, \eqref{prelaplacian} becomes
$$
\begin{aligned}
\left\langle\,\overline{\partial} G \,, \varphi \right\rangle\,
& = \left\langle\, G\,\partial \mu\,+\,\overline{G}\,\partial\nu\,+\,\partial H \,,\,\left(\,\Id\,-\,\mathcal{B^*}\overline{\mu} \,-\,
\mathbf{C}\overline{\nu}\,\right)^{-1} \varphi \,\right\rangle \\
&= \left\langle \left(\,\Id\,-\,\mu\mathcal{B}\,-\,\nu
\mathbf{C} \right)^{-1} (G\,\partial \mu\,+\,\overline{G}\,\partial\nu\,+\,\partial H) \,,\, \varphi \,\right\rangle ,
\end{aligned}
$$
for all $\varphi\in L^{s'}$ and we finish the proof of Theorem \ref{main}.

\begin{lem}\label{inverL2}
Let $\mu, \nu \in W^{1,2}(\C)$ be  compactly supported with $\| |\mu| + |\nu| \|_{\infty}\leq \frac{K-1}{K+1}$ for some $K\ge 1$. 
Then the generalized Beltrami operator
$$
 \Id-\mu\,\B  -\nu \overline{\B}:{W}^{1, q}(\C)\to {W}^{1, q}(\C)
$$
is bounded and boundedly invertible for any $1<q<2$. 
\end{lem}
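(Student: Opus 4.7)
The plan rests on one cornerstone: in dimension two, Poincar\'e's inequality together with absolute continuity of $\int|\nabla u|^2$ gives the embedding $W^{1,2}(\C) \hookrightarrow \operatorname{VMO}(\C)$. Consequently $\mu,\nu \in W^{1,2}\cap L^\infty_c \subset \operatorname{VMO}\cap L^\infty_c$, and Iwaniec's theorem (already invoked in Lemma \ref{invert}(b)) makes $T := \Id - \mu\B - \nu\overline{\B}$ invertible on every $L^t(\C)$, $t\in(1,\infty)$.

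Boundedness of $T$ on $W^{1,q}$, $1<q<2$, reduces to checking that multiplication by $\mu\in W^{1,2}\cap L^\infty_c$ is bounded on $W^{1,q}$: writing $\partial(\mu g)=(\partial\mu)g + \mu\,\partial g$, the second term is trivial and for the first I pair $\partial\mu\in L^2$ with $g$ via H\"older and the two-dimensional Sobolev embedding $W^{1,q}(\C)\hookrightarrow L^{q^*}(\C)$, where $q^* := 2q/(2-q) > 2$. Combined with the standard $W^{1,q}$-boundedness of $\B$ and $\overline{\B}$ (they act on Bessel potentials), this gives $T:W^{1,q}\to W^{1,q}$ bounded.

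For invertibility, given $g\in W^{1,q}\subset L^q\cap L^{q^*}$, let $f$ and $\tilde f$ be the $L^q$- and $L^{q^*}$-solutions of $Tf=g$ produced by the cornerstone. Their difference lies in $L^q+L^{q^*}$ and in the kernel of $T$, hence equals $\mu\B(f-\tilde f)+\nu\overline{\B}(f-\tilde f)$ and has compact support; since $L^{q^*}_{loc}\subset L^q_{loc}$, one has $f-\tilde f\in L^q$ globally, and $L^q$-injectivity of $T$ gives $f=\tilde f\in L^q\cap L^{q^*}$. Differentiating $Tf=g$ distributionally, using $\partial\B=\B\partial$ and the identity $\partial(\overline{\B} h)=\overline{\partial h}$ (a direct consequence of $\bar\partial\B=\partial$), one obtains
\[
(\Id-\mu\B-\nu\mathbf{C})(\partial f) \,=\, \partial g + (\partial\mu)(\B f) + (\partial\nu)(\overline{\B}f),
\]
together with an analogous companion equation that expresses $\bar\partial f$ in terms of $\partial f$ and similar lower-order terms. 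The operator $S := \Id-\mu\B-\nu\mathbf{C}$ factors as $(\Id-\nu\mathbf{C})(\Id-\tilde\mu\B-\tilde\nu\overline{\B})$ with $\tilde\mu = \mu/(1-|\nu|^2)$, $\tilde\nu = \nu\bar\mu/(1-|\nu|^2)$, whose second factor is a Beltrami operator with $\operatorname{VMO}$ coefficients and ellipticity still $\le k$; hence $S$ is invertible on every $L^q(\C)$ by Iwaniec again. The right-hand side lies in $L^q$: $\partial g\in L^q$, and by H\"older $(\partial\mu)(\B f)\in L^q$ because $\partial\mu\in L^2$ and $\B f\in L^{q^*}$ (from $f\in L^{q^*}$ plus $L^{q^*}$-boundedness of $\B$); the $\partial\nu$ term is identical. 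Inverting $S$ gives $\partial f\in L^q$, the companion equation then gives $\bar\partial f\in L^q$, and therefore $f\in W^{1,q}$. Injectivity on $W^{1,q}$ is immediate from $L^q$-injectivity, and bounded invertibility follows from the Open Mapping Theorem.

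The main obstacle is precisely this $L^q\to W^{1,q}$ upgrade: without the extra integrability $f\in L^{q^*}$, the product $(\partial\mu)(\B f)$ would only lie in $L^{2q/(2+q)}\subsetneq L^q$ and the differentiated equation could not be closed at the $L^q$ level. What overcomes this is the embedding $W^{1,2}(\C)\hookrightarrow\operatorname{VMO}(\C)$, which via Iwaniec's theorem lifts the AIS range of $L^t$-invertibility for Beltrami operators from $(1+k,1+1/k)$ to all of $(1,\infty)$, providing in particular the $L^{q^*}$-invertibility of $T$ for every $1<q<2$.
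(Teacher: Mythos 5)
Your proposal is correct and follows essentially the same route as the paper: boundedness via the multiplier property of $W^{1,2}\cap L^{\infty}$ on $W^{1,q}$, injectivity and $L^{q^*}$-solvability from Iwaniec's $\operatorname{VMO}$ theorem (using $W^{1,2}(\C)\hookrightarrow\operatorname{VMO}$), and the upgrade to $W^{1,q}$ by differentiating $Tf=g$ and inverting $\Id-\mu\,\B-\nu\,\mathbf{C}$ on $L^q$ through the factorization already used in Lemma \ref{invert}. The only cosmetic differences are that the paper carries out the differentiated identity in dual (test-function) form and then reads off $\overline{\partial}f$ directly as an explicit $L^q$ expression, whereas you state the equation in strong form and add a short uniqueness argument identifying the $L^q$ and $L^{q^*}$ solutions.
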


The above Lemma is implicit in \cite[Proposition 6]{BCO}. The proof follows Iwaniec's scheme showing that this
Beltrami operator is injective and a Fredholm operator on $W^{1,q}(\C)$ with index 0.
Even so, we provide another proof.

\begin{proof}
Set $T:=  \Id-\mu\,\B  -\nu \overline{\B}$. Since $W^{1,2}\cap L^{\infty}$ is a multiplier of $W^{1,q}$, $1<q<2$, that is
$$
\| g\, f\|_{W^{1,q}}\le \| g\|_{\infty}\| f\|_q + \| g\|_{\infty}\| \nabla f\|_q + \| \nabla g\|_{2}\| f\|_{q*} 
\le C (\| g\|_{\infty} + \| \nabla g\|_{2}) \| f\|_{W^{1,q}} ,
$$
it is clear that $T$ is a linear bounded operator from $W^{1,q}$ to itself. Recall that $\mu ,\nu\in \operatorname{VMO}$ and $W^{1,q}(\C)
\subset L^{\frac{2q}{2-q}}(\C)$. Thus $T$ is injective on $W^{1,q}$. Also, given $g\in W^{1,q}$ there is $f\in  L^{\frac{2q}{2-q}}$ such that
$Tf=g$. We want to check that really $f\in W^{1,q}$ and therefore $T$ is also surjective on $W^{1,q}$, finishing the proof.

Fisrt we will see that the distribution $\partial f$ belongs to $L^q(\C)$.
For any $\varphi\in C^\infty_c(\C)$ we have
$$\begin{aligned}
\left\langle \partial g, \varphi\right\rangle & = -\left\langle g,\overline{\partial}\varphi\right\rangle=-\left\langle f- 
\mu\mathcal{B}f-\nu\overline{\mathcal{B}}f,\overline{\partial} \varphi\right\rangle\\
&=-\left\langle f,\overline{\partial}\varphi\right\rangle + \left\langle \mathcal{B}f,
\overline{\mu}\overline{\partial}\varphi\right\rangle + \left\langle \overline{\mathcal{B}}f,\overline{\nu}\overline{\partial}\varphi\right\rangle\\
&= -\left\langle f,\overline{\partial}\varphi\right\rangle + \left\langle \mathcal{B}f,\overline{\partial}\left(\overline{\mu}\varphi\right) \right\rangle 
+ \left\langle \overline{\mathcal{B}}f,\overline{\partial}\left(\overline{\nu}\varphi \right)\right\rangle
- \left\langle\mathcal{B}f,\overline{\partial \mu}\varphi\right\rangle - \left\langle \overline{\mathcal{B}}f,\overline{\partial \nu}\varphi\right\rangle\,.
\end{aligned}$$
If we act properly, we can see that
$$\left\langle\partial g,\varphi\right\rangle + \left\langle\mathcal{B}f,\overline{\partial \mu}\varphi\right\rangle 
+ \left\langle \overline{\mathcal{B}}f,\overline{\partial \nu}\varphi\right\rangle = -\left\langle f,\overline{\partial}\varphi\right\rangle 
+ \left\langle \mathcal{B}f,\overline{\partial}\left(\overline{\mu}\varphi\right) \right\rangle 
+ \left\langle \overline{\mathcal{B}}f,\overline{\partial}\left(\overline{\nu}\varphi \right)\right\rangle\,. 
$$
Simplifying on the left hand side and taking adjoint operators on the right hand side we can rewrite the previous equality as
\begin{equation}\label{jojojo 0}
\left\langle\partial g+ \partial\mu\mathcal{B}f+\partial\nu\overline{\mathcal{B}}f\,,\,\varphi\right\rangle = 
\left\langle\partial f\,,\,\left( \Id-\mathcal{B^*}\overline{\mu}-\mathbf{C}\overline{\nu}\right)\,\varphi\right\rangle\,
\end{equation}
where in the last term we have used $\partial\overline{\mathcal{B}}=\mathbf{C}\partial$. 
Moreover, $\partial g+ \partial\mu\mathcal{B}f+\partial\nu\overline{\mathcal{B}}f$ belongs to $L^q(\C)$ and consequently equality \eqref{jojojo 0} extends to all 
$\varphi\in L^{q'}(\C)$, $q'=\frac{q}{q-1}$. Therefore, we can take as a test function 
$\varphi=\left(\Id- \mathcal{B^*}\overline{\mu}-\mathbf{C}\overline{\nu}\right)^{-1} \psi$ with $\psi \in C^\infty_c\left(\C\right)$. 
In this way we see that for all $\psi\in C^\infty_c\left(\C\right)$, we get
\begin{equation}\label{jojojo 1}
\left\langle\partial f\,,\,\psi\right\rangle = \left\langle \left(\Id-\mu\mathcal{B}-\nu\mathbf{C}\right)^{-1}
\left[ \partial g+ \partial\mu\mathcal{B}f+\partial\nu\overline{\mathcal{B}}f \right]\,,\psi\right\rangle\, ,
\end{equation}
which implies that $\partial f \in L^q(\C)$.
To obtain that $\overline{\partial}f$ also belongs to $L^q(\C)$ we proceed as follows.
$$
\begin{aligned}
\left\langle \overline{\partial}g,\varphi\right\rangle &= - \left\langle g ,\partial \varphi\right\rangle = -\left\langle f- \mu\mathcal{B}f-\nu\overline{\mathcal{B}}f,\partial\varphi\right\rangle\\
&=\left\langle\overline{\partial} f,\varphi\right\rangle - \left\langle \overline{\partial}\left(\mu\mathcal{B}f\right),\varphi\right\rangle - \left\langle \overline{\partial}\left(\nu\overline{\mathcal{B}}f\right),\varphi\right\rangle\,.
\end{aligned}
$$
Regrouping adequately and using that $\overline{\partial}\mathcal{B} f=\partial f$ and 
$\overline{\partial}\, \overline{\mathcal{B}} f= \overline{\partial}\,\mathcal{B^*}\bar f = \mathcal{B^*}(\overline{\partial f})$, we have
$$
\begin{aligned}
\left\langle\overline{\partial}f,\varphi\right\rangle  & = \left\langle\overline{\partial}g,\varphi\right\rangle 
+ \left\langle \mu\partial f + \overline{\partial}\mu\mathcal{B}f,\varphi\right\rangle + \left\langle \nu  \mathcal{B^*}(\overline{\partial f}) 
+ \overline{\partial}\nu \overline{\mathcal{B}}f,\varphi \right\rangle \\
& = \left\langle\overline{\partial}g + \mu\partial f + \overline{\partial}\mu\mathcal{B}f +  \nu  \mathcal{B^*}(\overline{\partial f}) 
+ \overline{\partial}\nu \overline{\mathcal{B}}\, , \, \varphi\right\rangle .
\end{aligned}
$$
Consequently $\overline{\partial}f= \overline{\partial}g + \mu\partial f + \overline{\partial}\mu\mathcal{B}f +  \nu  \mathcal{B^*}(\overline{\partial f}) 
+ \overline{\partial}\nu \overline{\mathcal{B}}$ belongs to $L^q(\C)$.

\end{proof}

\section{Proof of Proposition \ref{QC}}

\noindent
In order to proceed with the proof of Proposition \ref{QC}, we start with a Lemma.

\begin{lem}\label{claim}
Let $\mu,\nu\in C^\infty(\C)$ be compactly supported and such that $\||\mu|+|\nu|\|_\infty\leq \frac{K-1}{K+1}$ for some $K\geq 1$. Let $f:\C\to\C$ be a quasiconformal solution of
\begin{equation}\label{gener}
\overline\partial f -\mu\,\partial f - \nu\,\overline{\partial f}=0.
\end{equation}
Then the inequality
$$\|D (\log \partial f)\|_{L^p(\C)}\leq C(K,p)\,\left(\||\partial \mu|+|\partial\nu|\|_{L^p(\C)}\right)$$
holds if $\frac{2K}{K+1}<p<\frac{2K}{K-1}$.
\end{lem}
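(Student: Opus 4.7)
The plan is to differentiate the Beltrami equation, derive an $\R$-linear singular-integral equation satisfied by $v:=\overline\partial(\log\partial f)$, and then invert the resulting operator on $L^p$ by an algebraic factorization that reduces it to a standard $\R$-linear Beltrami operator with the same ellipticity constant $k$; the Astala--Iwaniec--Saksman invertibility result then delivers the conclusion in the sharp range $\frac{2K}{K+1}<p<\frac{2K}{K-1}$.

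Since $\mu,\nu$ are smooth with compact support and $f$ is quasiconformal, $f$ is $C^\infty$ and $\partial f$ never vanishes (because $J_f\ge (1-k^2)|\partial f|^2$). Setting $u:=\partial^2 f/\partial f$ and $v:=\overline\partial\partial f/\partial f$, these are the derivatives of any local branch of $\log\partial f$. After normalizing $f$ as the principal solution ($\partial f\to 1$ at $\infty$), a global smooth branch exists, $u$ and $v$ decay like $|z|^{-3}$, so both lie in $L^p(\C)$ for every $p$ and the Cauchy--Beurling calculus gives $u=\B v$. Applying $\partial$ to $\overline\partial f=\mu\partial f+\nu\overline{\partial f}$, dividing by $\partial f$, and setting $\alpha:=\overline{\partial f}/\partial f$ (which is unimodular), a short computation using $\partial\overline{\partial f}=\overline{\overline\partial\partial f}$ yields the pointwise identity
\[
v\,-\,\mu\,\B v\,-\,\nu\alpha\,\bar v\,=\,\partial\mu+\alpha\,\partial\nu .
\]

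The heart of the argument is the invertibility of the $\R$-linear operator $T:=\Id-\mu\,\B-\nu\alpha\,\mathbf{C}$ on $L^p(\C)$ in the stated range. Mimicking the factorization in the proof of Lemma \ref{invert}, I would seek $\tilde\mu,\tilde\nu$ with
\[
T=(\Id-\nu\alpha\,\mathbf{C})\bigl(\Id-\tilde\mu\,\B-\tilde\nu\,\overline{\B}\bigr).
\]
Expanding and using $\mathbf{C}\B=\overline{\B}$ and $|\alpha|=1$ forces $\tilde\mu=\mu/(1-|\nu|^2)$ and $\tilde\nu=\nu\alpha\bar\mu/(1-|\nu|^2)$, and the essential algebraic check is
\[
|\tilde\mu|+|\tilde\nu|=\frac{|\mu|(1+|\nu|)}{1-|\nu|^2}=\frac{|\mu|}{1-|\nu|}\le k,
\]
which follows from $|\mu|+|\nu|\le k<1$. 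Hence the second factor is an $\R$-linear Beltrami operator with the same ellipticity $k$, invertible on $L^p$ in exactly the advertised range by Astala--Iwaniec--Saksman, while $\Id-\nu\alpha\,\mathbf{C}$ is invertible on every $L^p$ (with explicit inverse $(\Id+\nu\alpha\,\mathbf{C})/(1-|\nu|^2)$).

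Applying $T^{-1}$ to the displayed identity yields $\|v\|_{L^p}\le C(K,p)\,\||\partial\mu|+|\partial\nu|\|_{L^p}$ (using $|\alpha|=1$), after which $\|u\|_{L^p}=\|\B v\|_{L^p}\le C_p\|v\|_{L^p}$ gives the same bound for $|D(\log\partial f)|$. The point I expect to require the most care is spotting the factorization that preserves the ellipticity constant $k$ exactly: any looser estimate on $|\tilde\mu|+|\tilde\nu|$ would shrink the range of admissible $p$ and destroy the sharpness stated in the lemma.
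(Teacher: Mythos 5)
Your proposal is correct and follows essentially the same route as the paper: both differentiate the equation, reduce to an elliptic problem for $\log\partial f$ whose ellipticity constant is still $k$ via the identity $|\mu|(1+|\nu|)/(1-|\nu|^2)=|\mu|/(1-|\nu|)\le k$, and conclude by Astala--Iwaniec--Saksman. The only difference is presentational: the paper applies $(\Id-\nu\,\mathbf{C})^{-1}$ pointwise to obtain a nonhomogeneous generalized Beltrami equation for $g=\log\partial f$ and invokes the a priori estimate of \cite[Theorem 14.0.4]{AIM} directly, whereas you pass to the operator equation for $\overline\partial g$ and invert $\Id-\mu\,\B-\nu\alpha\,\mathbf{C}$ by hand through the factorization already used in Lemma \ref{invert}.
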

\begin{proof}
Let us first remind that the complex logarithm $g=\log\partial f$ is well defined. Indeed, in the present smooth setting, $\partial f$ is a non-vanishing, complex valued smooth function, and therefore it has a well defined logarithm, wich is also smooth. We are thus legitimate to take derivatives at \eqref{gener}, and obtain
$$
\overline\partial (\partial f)-\mu\,\partial(\partial f)-\nu\partial\overline{(\partial f)}=\partial \mu\,\partial f+\partial\nu\,\overline{\partial f}
$$
whence
$$
(\Id-\nu\,\mathbf{C})(\overline\partial\partial f)-\mu\,\partial\partial f=\partial \mu\,\partial f+\partial\nu\,\overline{\partial f}
$$
Since $(\Id-\nu\,\mathbf{C})^{-1}=\frac1{1-|\nu|^2}\,(\Id+\nu\,\mathbf{C})$, we get
$$
\overline\partial \partial f -\mu_0\,\partial\partial f-\nu_0\,\overline{\partial\partial f}=\frac1{1-|\nu|^2}\,(\Id+\nu\,\mathbf{C})(\partial \mu\,\partial f+\partial\nu\,\overline{\partial f})
$$
with
$$\aligned
\mu_0&=\frac{\mu}{1-|\nu|^2}\\\nu_0&=\frac{\nu\overline\mu}{1-|\nu|^2}\endaligned$$
Now, we recall that
$$
\aligned
\overline\partial g&=\frac{\overline\partial \partial f}{\partial f}\\
\partial g&=\frac{\partial \partial f}{\partial f}\\
\overline{\partial g}&=\frac{\overline{\partial \partial f}}{\overline{\partial f}}
\endaligned
$$
Thus
$$
\overline\partial g -\mu_0\,\partial g-\tilde{\nu}_0\,\overline{\partial g}=\frac1{\partial f}\,\frac1{1-|\nu|^2}\,(\Id+\nu\,\mathbf{C})(\partial \mu\,\partial f+\partial\nu\,\overline{\partial f})
$$
where $|\tilde{\nu}_0|=|\nu_0|$. Now, since
$$
\left|\frac1{\partial f}\,\frac1{1-|\nu|^2}\,(\Id+\nu\,\mathbf{C})(\partial \mu\,\partial f+\partial\nu\,\overline{\partial f})\right|\leq C(K)\,(|\partial\mu|+|\partial\nu|)
$$
we can use \cite{AIS}(e.g. \cite[Theorem 14.0.4]{AIM}) to state that
$$\|Dg\|_{L^p(\C)}\leq C(K,p)\,\||\partial\mu|+|\partial\nu|\|_{L^p(\C)}$$
when $p$ lies in the critical interval $(\frac{2K}{K+1},\frac{2K}{K-1})$. The claim follows.
\end{proof}

\begin{proof}[Proof of Proposition \ref{QC}]
In the assumptions of Proposition \ref{QC}, $f$ is holomorphic at $\infty$. Being also bijective, it can only have linear growth as $|z|\to\infty$, 
so that there must exist $a\in\C$, $a\neq 0$, and $C>0$ such that
$|f(z)-a\,z|\leq \frac{C}{|z|}$ as $|z|\to\infty$. In particular, this tells us that the value $\partial f(\infty)$ is well defined, 
and uniquely determines a well defined branch of the complex logarithm $\log \partial f$, precisely as done in \cite{AIPS}. \\
\\
We mollify $\mu$ and $\nu$ and obtain a sequence $\mu_n,\nu_n\in C^\infty$ of compactly supported coefficients, $|\mu_n|+|\nu_n|\leq |\mu|+|\nu|\leq \frac{K-1}{K+1}$ such that
$$\lim_{n\to\infty}\|\mu_n-\mu\|_{W^{1,p}(\C)}+\|\nu_n-\nu\|_{W^{1,p}(\C)}=0.$$
To each pair $\mu_n,\nu_n$ we can associate a unique quasiconformal map $f_n$ such that $\partial f_n-a = \B(\overline\partial f_n)$, and in particular
$|f_n(z)-a\,z|\leq \frac{C}{|z|}$ as $|z|\to\infty$, since $\overline\partial f_n$ has compact support. 
It can be shown that in this situation the sequences $\{\overline\partial f_n\}_n$ and $\{\partial f_n-a\}_n$ are bounded in $L^r(\C)$ 
for each $2<r<\frac{2K}{K-1}$, and indeed, modulo subsequences,
$$\lim_{n\to\infty}\|Df_n-Df\|_{L^r(C)}=0,\hspace{2cm}2<r<\frac{2K}{K-1}.$$
We further know that $g_n=\log \partial f_n$ is well defined an moreover $\partial g_n\,\partial f_n=\partial\partial f_n $. But both $\partial f_n$ and $\partial g_n$ can be granted a degree of integrability independent of $n$. Indeed, from Lemma \ref{claim} there is a uniform bound
$$\aligned
\|Dg_n\|_{L^p(\C)}
&\leq C(K,p)\,\left(\||\partial \mu_n|+|\partial\nu_n|\|_{L^p(\C)}\right)\\
&\leq C(K,p)\,\left(\||\partial \mu|+|\partial\nu|\|_{L^p(\C)}\right)
\endaligned$$
Combining this with the optimal integrability of quasiconformal maps by Astala \cite{A} we obtain local $L^s$ bounds for $\partial\partial f_n$, independent of $n$, whenever
$$\frac1s>\frac{K-1}{2K}+\frac1p.$$
It then follows that $\{\partial\partial f_n\}_n$ has a weak accumulation point in $L^s_{loc}(\C)$, which obviously can only be $\partial\partial f$, so that $\partial\partial f\in L^s_{loc}(\C)$. A similar argument shows that $\overline\partial \partial f$ belongs to $L^s_{loc}(\C)$ (actually $L^s(\C)$, since $\overline\partial f$ has compact support). In order to see that $\overline\partial\overline\partial f\in L^s (\C)$, just notice that from $\partial f - a = \B(\overline\partial f)$ one gets $\overline\partial f=\B^\ast(\partial f-a)$. Thus,  $\overline\partial\overline\partial f =\B^\ast(\overline\partial \partial f)$. \end{proof}

\noindent {\textbf{Acknowledgements}}. The authors were partially supported by research grants 2014SGR75 (Generalitat de Catalunya), MTM2016-75390-P (Spanish government)
and FP7-607647 (European Union).

\noindent
Antonio L. Bais\'on Olmo\\
   Departamento de Ciencias B\'asicas.\\
   Universidad Aut\'onoma Metropolitana - Unidad Azcapotzalco.\\
   Av. San Pablo No. 180\\
   Col. Reynosa Tamaulipas\\
   C.P. 02200\\
   Delegación Azcapotzalco\\
   Distrito Federal (M\'exico)\\
   A. Clop, J. Orobitg\\
Departament de Matem\`atiques\\
Universitat Aut\`onoma de Barcelona\\
08193-Bellaterra (Catalonia)

\vspace{0.2cm}
\noindent{\it E-mail addresses}:\\
\texttt{albo@correo.azc.uam.mx}\\
\texttt{albertcp@mat.uab.cat}\\
\texttt{orobitg@mat.uab.cat}


\begin{thebibliography}{bib}

\bibitem{Ahl} L. Ahlfors, \emph{Lectures on quasiconformal mappings}. Second ed., University Lecture Series, 38, 
American Mathematical Society, Providence, RI, 2006.

\bibitem{A} K. Astala, {\em Area distortion of quasiconformal mappings},  
Acta Math. 173 (1994), no. 1, 37--60. 

\bibitem{AIM} K.\ Astala, T.\ Iwaniec and G.\ Martin, {\em Elliptic Equations and Quasiconformal Mappings in the Plane,} 
Princeton Mathematical Series, vol. 47, Princeton University Press, 2009.
 
\bibitem{AIS}  K.\ Astala, T.\ Iwaniec and E.\  Saksman, \emph{Beltrami operators in the plane},  Duke Math. J.  \textbf{107}  (2001),  no. 1, 27--56.

\bibitem{AIPS}  K. Astala, T. Iwaniec, I. Prause, E. Saksman, \emph{Bilipschitz and quasiconformal rotation, stretching and multifractal spectra}, 
Publ. Math de l'IHÉS. September 2014.

\bibitem{BCO} A.\ Bais\' on, A.\ Clop, J.\ Orobitg, \emph{Beltrami equations with coefficient in the fractional Sobolev space 
Sobolev space $W\sp {\theta,\frac2{\theta}}$}, Proc. Amer. Math. Soc.   \textbf{145}  (2017),  no. 1, 139--149.

\bibitem{B} Baratchart, Laurent ;  Fischer, Yannick ;  Leblond, Juliette . Dirichlet/Neumann problems and Hardy classes for the planar
 conductivity equation.
 Complex Var. Elliptic Equ.  59  (2014),  no. 4, 504--538.

\bibitem{CC} A. Clop, V. Cruz, \emph{Weighted estimates for Beltrami equations}, Ann. Acad. Sci. Fenn. Math. \textbf{38} (2013), no. 1, 91--113.

\bibitem{CFMOZ} A. Clop, D. Faraco, J. Mateu, J. Orobitg, and X. Zhong, \emph{Beltrami equations with coefficient in the Sobolev Space $W^{1,p}$}, 
Publ. Mat. \textbf{53} (2009), 197-230.

\bibitem{CMO} V. Cruz, J. Mateu and J. Orobitg. \emph{Beltrami Equation with Coefficient in Sobolev and Besov Spaces}. Canad. J. Math. 
\textbf{65}\,(2013), 1217-1235.



\bibitem{I} T. Iwaniec, \emph{$L^p$-theory of quasiregular mappings}, Quasiconformal space mappings, volume 1508 of Lecture Notes in Math., pp 39--64. Springer, Berlin, 1992.


\bibitem{Ko} A. Koski, \emph{Singular integrals and Beltrami type operators in the plane and beyond.
Master Thesis}, Department of Mathematics, University of Helsinki, 2011.


\bibitem{PV} S.\ Petermichl, A.\ Volberg,\emph{Heating of the Ahlfors-Beurling operator: weakly quasiregular maps on  the plane are quasiregular},
 Duke Math. J.  \textbf{112}  (2002),  no. 2, 281--305.

\bibitem{Pr} M.\ Prats, \emph{Beltrami equations in the plane and Sobolev regularity},  arXiv:1606.07751. 2016
 

\end{thebibliography}
\end{document}